\documentclass[12pt, reqno]{amsart}
\usepackage{amssymb,amsmath,amsopn,textcomp, thmtools}

\usepackage[backgroundcolor=white, bordercolor=blue, 
linecolor=blue]{todonotes}
\usepackage[normalem]{ulem}

\usepackage[a4paper,margin=1in,footskip=0.25in]{geometry}
\usepackage[ansinew]{inputenc}
\usepackage[english]{babel}
\usepackage{mathrsfs}
\usepackage{setspace}
\usepackage{stmaryrd}
\usepackage[titletoc]{appendix}
\usepackage[colorlinks=true, linkcolor=blue, citecolor=blue]{hyperref}


\usepackage{dsfont}
\usepackage{bbm}

\usepackage{graphicx}
\usepackage{color}
\usepackage{caption,rotating}

\newtheorem{theorem}{Theorem}[section]
\newtheorem{corollary}[theorem]{Corollary}
\newtheorem{proposition}[theorem]{Proposition}
\newtheorem{lemma}[theorem]{Lemma}
\newtheorem*{theorem*}{Theorem}

\theoremstyle{definition}
\newtheorem{definition}[theorem]{Definition}

\newtheorem{remark}[theorem]{Remark}

\renewenvironment{proof}[1][Proof]{\noindent\textit{#1.} }{\hfill 
	\rule{0.5em}{0.5em}}

\sloppy

\addto\extrasenglish{}
\addto\extrasenglish{} 

\numberwithin{equation}{section}


\usepackage{color}

\newcommand{\cI}{\mathcal{I}}

\newcommand{\cX}{\mathcal{X}}
\newcommand{\cN}{\mathcal{N}}

\newcommand{\cE}{\mathcal{E}}
\newcommand{\cF}{\mathcal{F}}

\newcommand{\cL}{\mathcal{L}}
\newcommand{\cV}{\mathcal{V}}
\newcommand{\bP}{\mathbb{P}}
\newcommand{\bE}{\mathbb E}
\newcommand{\bR}{\mathbb R}

\def\scE{{\mathscr E}}
\def\scF{{\mathscr F}}

\def\scB{{\mathscr B}}
\def\wt{\widetilde}

\def\rd{\mathrm{d}}

\newcommand{\N}{\mathbb{N}}
\newcommand{\bS}{\mathbb{S}}
\newcommand{\R}{\mathbb{R}}

\newcommand{\Rd}{{\mathbb{R}^d}}

\newcommand{\E}{\mathbb{E}}
\newcommand{\1}{\mathbbm 1}

\newcommand{\eps}{\varepsilon}
\def\nn{\nonumber} 
\DeclareMathOperator*{\dist}{dist}

\setlength{\parindent}{0ex}
\setlength{\parskip}{0ex}


\author{Jaehoon Kang }
\email{jaehoon.kang@kaist.ac.kr}
\address{Stochastic Analysis and Application Research Center, Korea Advanced Institute of Science and Technology, 291 Daehak-ro, Yuseong-gu, Daejeon, 34141, Republic of Korea}

\title[Heat kernel estimates for anisotropic symmetric jump processes ]
{Heat kernel estimates for\\ anisotropic symmetric jump processes}

\begin{document}

\thanks{2020 \emph{Mathematics Subject Classification}. Primary 60J76, 60J46; Secondary 35K08, 35A08.\\ 
	\emph{Key words and phrases}. symmetric Markov jump process, heat kernel, integro-differential operator, anisotropic process.\\
	This work was supported by the National Research Foundation of Korea(NRF) grant funded by the Korea government(MSIT)(No. 2019R1A5A1028324).}

	\begin{abstract}
We show two-sided bounds of heat kernel for anisotropic non-singular symmetric pure jump Markov process whose jump kernel $J(x,y)$ is comparable to $\frac{\1_{\cV}(x-y)}{|x-y|^{d+\alpha}}$, where $\cV$ is a union of symmetric cones, $0<\alpha<2$ and $x,y\in\R^d$. 

	\end{abstract}
	
	\maketitle

\setcounter{tocdepth}{1}		
\tableofcontents

\section{Introduction}
\subsection{Background}
Let $\cX$ be a Markov process in $\R^d$. The transition density function for $\cX$ is a measurable function $\wt p:(0,\infty)\times \R^d\times\R^d\to(0,\infty)$ satisfying 
$$\bP^x(\cX_t\in A)=\int_A \wt p(t,x,y)\rd y,\quad\text{for}\;\;A\in\scB,$$
where $\scB$ is a Borel set. 
Thus, transition density function shows distribution of Markov process. 
Transition density function is called heat kernel since it is the fundamental solution to the equation $\partial_t u=\cL u$, where $\cL$ is the infinitesimal generator of the Markov process.  For example, the transition density function $p^B(t,x,y)$ of standard Brownian motion in $\R^d$ is given by
$$p^B(t,x,y)=\frac{1}{(2\pi t)^{d/2}}\exp\left(-\frac{|x-y|^2}{2t}\right)$$
and it is well-known that $p^B(t,x,y)$ is the fundamental solution of the heat equation
$$\partial_tu-\frac12\Delta u=0,$$
where $\Delta$ is the Laplacian and $\frac12\Delta$ is the infinitesimal generator of standard Brownian motion. Except a few special cases, it is impossible to obtain explicit form of heat kernel. Thus, obtaining two-sided heat kernel bounds is important for studying Markov processes and their infinitesimal generator. 

\medskip

Heat kernel estimates for symmetric Markov processes in Euclidean space have been addressed in many research articles. In particular, during past several decades, symmetric Markov processes with discontinuous sample paths have been intensively studied. One of well-known example is a pure jump symmetric  Markov process $Z$ in $\R^d$, whose infinitesimal generator $\cL$ is given by
$$\cL u(x)=\mbox{p.v.}\int_{\R^d}(u(y)-u(x))j(x,y)\rd y,$$
where $j(x,y)=j(y,x)$ and for some $\alpha\in(0,2)$ and $c_1, c_2>0$ the relation $c_1|x-y|^{-d-\alpha}\le j(x,y)\le c_2|x-y|^{-d-\alpha}$ holds for $x\neq y$.
It is shown in \cite{CK03} that the heat kernel $p^Z(t,x,y)$ for $Z$ satisfies the following: there exist $C\ge1$ such that for all $ t> 0$ and $x,y\in\R^d$
$$C^{-1}t^{-d/\alpha}\left(1\wedge \frac{t}{|x-y|^{\alpha}}\right)^{1+d/\alpha}\le p^Z(t,x,y)\le Ct^{-d/\alpha}\left(1\wedge \frac{t}{|x-y|^{\alpha}}\right)^{1+d/\alpha}.$$
Research articles \cite{CK03, SV07, CKK08, CK10, CKK11, BGR14,  M16, GRT19, BKKL19} discuss symmetric discontinuous Markov processes with jump kernels that are comparable to isotropic functions. In these cases, the Markov processes can jump in any directions. Thus, for any $x,y\in\R^d$,  the Markov processes which start at $x$ can touch $y$ with one jump unless the jump kernels are truncated.

\medskip

On the other hand, we can see heat kernel estimates for symmetric Markov processes with singular jump measure in \cite{Xu13, KKK19, KK21}. The Markov process considered in \cite{Xu13, KKK19} can only jump in the directions parallel with coordinate axes. A typical example is $d$-dimensional singular $\alpha$-stable process with the L\'evy measure $\nu$ given by
\begin{align*}
\nu(\rd h)=\sum^{d}_{i=1}|h^i|^{-1-\alpha}\rd h^i\prod_{j\neq i}\delta_{\{0\}}(\rd h^j).
\end{align*}
Thus, the L\'evy measure is concentrated on the union of $1$-dimensional subsets of Euclidean space.
In this case, the process can move from $x$ to $y$ with one jump if and only if $y-x$ is parallel with an axis. Thus, the process which starts at $x$ needs more than one jump to touch $y$ if $y-x$ is not parallel with an axis. By the independence of coordinate processes and heat kernel estimates for $1$-dimensional symmetric $\alpha$-stable process, it is easy to see that  heat kernel $p_0(t,x,y)$ of $d$-dimensional singular $\alpha$-stable process satisfies the following: there exists $C\ge1$ such that for any $t>0$ and $x,y\in\R^d$
$$C^{-1}\prod^{d}_{i=1}t^{-1/\alpha}\left(1\wedge\frac{t}{|x^i-y^i|^\alpha}\right)^{1+1/\alpha}\le p_0(t,x,y)\le C \prod^{d}_{i=1}t^{-1/\alpha}\left(1\wedge\frac{t}{|x^i-y^i|^\alpha}\right)^{1+1/\alpha},$$
where $x=(x^1, x^2, \dots, x^d)$ and $y=(y^1, y^2, \dots, y^d)$.  
By \cite{Xu13, KKK19}, we see that   heat kernel estimates for singular Markov jump processes are robust.

\medskip
 
In this article, we study anisotropic non-singular symmetric pure jump processes. More precisely, we consider a symmetric function $J(x,y)$ which is comparable to $\frac{\1_{\cV}(x-y)}{|x-y|^{d+\alpha}}$ as the jump kernel for the jump process. Here, $\cV$ is a symmetric set given by the union of symmetric cones and $0<\alpha<2$.  Then, we see that the corresponding Markov process can jump from $x$ to a point $y\in\cV-\{x\}$ directly, i.e., the process can touch $y$ with one jump. However, this process starting at $x$ needs more than one jump to touch $y\in\R^d\setminus(\cV-\{x\})$. Thus, we can see that the behavior of Markov process with jump kernel $J(x,y)$ has properties of those of isotropic $\alpha$-stable process and singular $\alpha$-stable process, together. The goal of this article is to obtain two-sided sharp heat kernel bounds for the Markov process with jump kernel $J(x,y)$.

\medskip

Let us mention recent studies related to anisotropic Markov processes. 
Weak Harnack inequality and H\"older regularity of solutions to the equations with the nonlocal operators including anisotropic cases are shown in \cite{FK13, BKS19, CKWb19} using analytic approach.  Heat kernel estimates for L\'evy processes with the general L\'evy measures which cover anisotropic cases are discussed in \cite{KaSz15, KaSz19}. Heat kernel bounds for singular Markov process in \cite{Xu13, KKK19} are extended to more general processes in \cite{KK21}. The Markov process considered in \cite{KK21} can move continuously depending on direction.

\subsection{Setup and main result}
Let $d\ge2$ and $\bS(d-1)$ be the unit sphere in $\R^d$. For $\lambda\in \bS(d-1)$ and $\theta\in (0,\pi/2)$, we define cone in $\R^d$ with vertex $0\in\R^d$, axis $\lambda$ and  aperture $\theta>0$  by
\begin{align*}
\Gamma^+(\lambda,\theta)&:=\Big\{z\in\R^d\setminus\{0\} : \lambda\cdot \frac{z}{|z|}\in (\cos\theta,1]\Big\}.
\end{align*}
Using this, we also define
\begin{align*}
\Gamma^-(\lambda, \theta)&:=\Gamma^+(-\lambda,\theta),\quad
\Gamma(\lambda, \theta):=\Gamma^+(\lambda, \theta)\cup \Gamma^-(\lambda, \theta).
\end{align*}
We say $\Gamma(\lambda, \theta)$ is a symmetric cone. Note that for any $\lambda\in \bS(d-1)$ and $\theta\in (0,\pi/2)$, we have $\Gamma^+(\lambda,\theta)=\Gamma^-(-\lambda,\theta)$ and $0<\theta'\le \theta$ implies $\Gamma(\lambda, \theta')\subset \Gamma(\lambda, \theta)$. 
For $i\in \cI:=\{1,2,\dots, n\}$ with $n\in\N$, let $\lambda_i\in \bS(d-1)$, $\theta_i\in(0,\pi/2]$ and $\Gamma_i=\Gamma(\lambda_i, \theta_i)$. 
We define
\begin{align}\label{general_set}
\cV:=\cup_{i=1}^n \Gamma_i.
\end{align}
Then, $\cV$ is a symmetric set in the sense that $x\in \cV$ if and only if $-x\in \cV$. 
For $z\in\R^d$ and $A, B\subset \R^d$, we define
\begin{align*}
A_z&:=A+z:=\{x+z\,|\,x\in A\},\quad
A_B:=\cup_{z\in B}A_z.
\end{align*}
Note that for given $\cV$,  $y\in \cV_x$ if and only if $x\in \cV_y$ for all $x,y\in\R^d$. For $0<\alpha<2$ and $\cV$ given by \eqref{general_set}, define 
\begin{align*}
J^{\alpha}(x,y):=\frac{\1_{\cV}(x-y)}{|x-y|^{d+\alpha}}.
\end{align*}
Then, we see that $J^{\alpha}(x,y)=J^{\alpha}(y,x)$ for all $x,y\in\Rd$. Using this, we define the symmetric Dirichlet form on $L^2(\Rd)$ by 
\begin{align}\label{def:DF_Levy}\begin{split}
\scE(u,v)&:=\int_{\R^{d}}\int_{\R^{d}}(u(y)-u(x))(v(y)-v(x))J^{\alpha}(x, y)\,\rd y\rd x\\
\scF&:=\{u\in L^2(\R^{d}):\scE(u,u)<\infty\}.
\end{split}\end{align}
Then,  $(\scE, \scF)$ is a regular Dirichlet form by \cite[Theorem 2.4]{SU12}.

Let $\kappa\ge1$ and $J:\R^{d}\times \R^{d}\to (0,\infty)$ be a symmetric function satisfying
\begin{align}\label{J_comp}
{\kappa^{-1}}J^{\alpha}(x, y)\le J(x, y)\le {\kappa }J^{\alpha}(x, y).
\end{align}
Define the symmetric Dirichlet form $(\cE, \cF)$ by
\begin{align}\label{def:DF}\begin{split}
\cE(u,v)&:=\int_{\R^{d}}\int_{\R^{d}}(u(y)-u(x))(v(y)-v(x))J(x, y)\,\rd y\rd x\\
\cF&:=\{u\in L^2(\R^{d}):\cE(u,u)<\infty\}.
\end{split}\end{align}
Then, we see by \eqref{J_comp} that for any $u\in L^2(\Rd)$,
\begin{align}\label{E_comp}
\kappa^{-1}\cE(u,u)\le \scE(u,u)\le \kappa\cE(u,u)
\end{align}
and thus $\cF=\scF$. This shows that $(\cE, \cF)$ is also a regular Dirichlet form. Thus, there exists a Hunt process $X=X_{\cV}$ associated with $(\cE, \cF)$. The following is the main result of this article. For any $x\in\R^d$ and $A\subset \R^d$, $\dist(x, A):=\inf\{|x-w|:w\in A\}$ is the distance between $x$ and $A$.

\begin{theorem}\label{t:main}
Let $d\ge2$, $\alpha\in(0,2)$ and $\cV$ be a symmetric set given by \eqref{general_set}. Suppose $J$ satisfies \eqref{J_comp} and $(\cE, \cF)$ is the Dirichlet form given by \eqref{def:DF}. Then, there is a conservative Feller process $X=(X_t, \bP^x, x\in \R^{d}, t \ge 0)$ associated with $(\cE, \cF)$ that starts every point in $\bR^{d}$. Moreover,  $X$ has a continuous transition density function $p(t,x,y)$ on $(0,\infty)\times\bR^{d}\times\bR^{d}$, with the following estimates: there exists  $C\ge1$ such that for any $(t,x,y)\in (0,\infty)\times \R^{d} \times \R^{d}$,
\begin{align}\begin{split}\label{e:main}
&C^{-1}t^{-d/\alpha}\left(1\wedge\frac{t}{|x-y|^{\alpha}}\right)^{1+d/\alpha}\left(1\wedge\frac{t}{(\dist(y-x, \cV))^{\alpha}}\right)\\
&\le p(t,x,y)
\le C t^{-d/\alpha}\left(1\wedge\frac{t}{|x-y|^{\alpha}}\right)^{1+d/\alpha}\left(1\wedge\frac{t}{(\dist(y-x, \cV))^{\alpha}}\right).
\end{split}
\end{align}
\end{theorem}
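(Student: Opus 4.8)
The plan is to follow the now-standard Davies–Nash–Carlen / meta-theorem approach to jump-process heat kernel estimates, as developed in \cite{CK03, CKK08, CKK11, BGK12}, but with all the geometry encoded through the set $\cV$ and the anisotropic distance $\dist(y-x,\cV)$. First I would establish the existence of a (H\"older-)continuous transition density $p(t,x,y)$ and the Feller/conservativeness properties: this comes from a Nash-type inequality for $(\scE,\scF)$ — which holds because $J^\alpha$ dominates a truncated $\alpha$-stable kernel in each cone, giving the on-diagonal bound $p(t,x,y)\le Ct^{-d/\alpha}$ — together with \eqref{E_comp}, and then from the Dirichlet-form-level parabolic Harnack/H\"older estimates obtainable from the weak Harnack inequality for such anisotropic operators in \cite{FK13, CKWb19}. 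Conservativeness follows from the growth condition on $\cE$ of balls.

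The core of the argument is the two-sided off-diagonal bound, and I would split it into the near-diagonal regime $t\ge |x-y|^\alpha$ (equivalently $t$ large relative to the spatial scale) and the far regime. For the \emph{upper bound}, the key device is a Meyer-type decomposition: write $J = J_{\le\rho} + J_{>\rho}$ by truncating jumps of size $>\rho$, run the heat kernel comparison for the truncated (bounded-range) process, and control the contribution of big jumps by the Lévy-system formula $\bP^x(X$ jumps from $B(x,r)$ to outside $B(x,2r)$ before time $t)\le ct/r^\alpha$ (using only that $J\le\kappa J^\alpha\le\kappa|x-y|^{-d-\alpha}$, which ignores $\cV$). To capture the extra factor $1\wedge t(\dist(y-x,\cV))^{-\alpha}$ I would iterate this: to reach a point $y$ with $\dist(y-x,\cV)=s>0$ the process must make at least two ``non-aligned'' jumps, and each forced jump of size $\gtrsim s$ in a ``bad'' direction costs a factor $\lesssim t/s^\alpha$; chaining via the semigroup property $p(t,x,y)=\int p(t/2,x,z)p(t/2,z,y)\,\d z$ and optimizing over the intermediate point $z$ (chosen so that $z-x$ and $y-z$ are both essentially in $\cV$) yields the product structure. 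This is exactly where the cone geometry enters: one needs that for every $x,y$ there is an intermediate $z$ with $|x-z|,|z-y|\le c|x-y|$, $z-x\in\cV$, $y-z\in\cV$, and moreover when $\dist(y-x,\cV)$ is small the ``detour'' can be made short, while when it is comparable to $|x-y|$ the two-jump bound forces the full $t^2/(|x-y|^\alpha s^\alpha)$ decay.

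For the \emph{lower bound} I would prove a near-diagonal lower bound $p(t,x,y)\ge c t^{-d/\alpha}$ for $|x-y|\le \eps t^{1/\alpha}$ first — this follows from the on-diagonal upper bound plus the parabolic Harnack inequality (or a direct $L^1$–$L^2$ and Carlen–Kusuoka–Stroock argument using the Nash inequality), valid because a small ball is reachable by aligned jumps. Then, to get the lower bound for general $x,y$, I would use the Lévy system in the reverse direction: bound $p(t,x,y)$ below by the probability of the event that the process makes one ``aligned'' big jump landing in a good small ball near $y$ and then stays near $y$; when $y-x\notin\cV$ this single jump is impossible, so instead one forces two successive aligned jumps through an intermediate region where $z-x\in\Gamma_i$ and $y-z\in\Gamma_j$ for suitable $i,j$ — the union-of-cones structure guarantees such a pair exists — and multiplies the two one-jump lower bounds, each of order $t/(\text{size})^{d+\alpha}$ times the volume of the target, producing the matching $1\wedge t|x-y|^{-\alpha})^{1+d/\alpha}(1\wedge t(\dist(y-x,\cV))^{-\alpha})$.

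The main obstacle I anticipate is the \emph{geometric bookkeeping for the upper bound in the intermediate regime} where $\dist(y-x,\cV)\asymp|x-y|\asymp$ (a fixed scale) and $t$ is moderate: one must show that reaching $y$ genuinely requires two independent ``bad'' jumps and that no cheaper path (e.g. one slightly-off-cone jump, or a long sequence of tiny jumps accumulating transverse displacement) beats the claimed bound — the tiny-jumps possibility is precisely why the cones must be \emph{open} and why $\cV$ being a finite union matters, since transverse drift from many small aligned jumps is impossible. Making the ``two-jump lower bound'' rigorous via an exit-time / hitting-probability estimate that is uniform in the configuration, and patching the regimes together with the correct constants, will be the technical heart; the rest is a careful but routine adaptation of \cite{CKK11, BGK12} with $|x-y|^\alpha$ replaced throughout by the pair of quantities $|x-y|^\alpha$ and $(\dist(y-x,\cV))^\alpha$.
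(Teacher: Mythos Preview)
Your overall architecture --- Nash inequality for the on-diagonal bound, H\"older continuity via \cite{FK13}, near-diagonal lower bound, and then L\'evy-system chaining for the off-diagonal lower bound --- matches the paper closely. In particular your lower-bound plan (one aligned jump when $y-x\in\cV$, two aligned jumps through an intermediate point $z$ with $z-x,\,y-z\in\cV$ when $y-x\notin\cV$) is exactly what the paper carries out in \autoref{t:lhk}.

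The genuine gap is in your mechanism for the \emph{sharp upper bound}. You write that the extra factor $1\wedge t(\dist(y-x,\cV))^{-\alpha}$ comes from ``chaining via the semigroup property $p(t,x,y)=\int p(t/2,x,z)p(t/2,z,y)\,\rd z$ and optimizing over the intermediate point $z$''. But the semigroup identity is an equality over \emph{all} $z$; you cannot choose $z$ for an upper bound. If you insert the rough bound $p(s,u,v)\le C\big(s^{-d/\alpha}\wedge s|u-v|^{-d-\alpha}\big)$ into both factors and integrate, the integral is dominated by the regions $|z-x|\lesssim t^{1/\alpha}$ and $|z-y|\lesssim t^{1/\alpha}$, where one factor is $\asymp t^{-d/\alpha}$ and no cone geometry is visible; you simply recover $Ct|x-y|^{-d-\alpha}$ with no improvement. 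The heuristic ``each forced jump in a bad direction costs $t/s^\alpha$'' is also not quite right: the process \emph{cannot} jump in a bad direction ($J$ vanishes there), so the cost has to be extracted from the geometry of where aligned jumps can land.

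The paper's device is the Barlow--Grigor'yan--Kumagai stopping-time decomposition \cite{BGK09} (\autoref{lem:LBGK} and \autoref{p:sub_key}), not semigroup chaining. One sets $R_1=\dist(y_0-x_0,\cV)$, lets $\tau$ be the exit time from $B=B(x_0,R_1/(8C_*))$, and bounds $\E^x\big[\1_{\{\tau\le t/2\}}P_{t-\tau}f(X_\tau)\big]$ for $f$ supported near $y_0$. The geometric point is that $X_\tau\in\cV_B\setminus B$, and $\cV_B$ stays at distance $\ge R_1/2$ from $y_0$. One then splits according to whether $X_\tau$ lands closer to $x_0$ (bounded via the exit estimate $\bP^x(\tau\le t/2)\le ct/R_1^\alpha$) or closer to $y_0$ (bounded via the L\'evy system, since the jump from $B$ must cover distance $\gtrsim|x_0-y_0|$). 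Both pieces produce exactly one extra factor $t/R_1^\alpha$, with no iteration needed. This single-exit argument is the missing ingredient in your plan; once it is in place, the ``many tiny jumps accumulating transverse displacement'' worry you raise is automatically absorbed by the rough Davies bound applied inside $B$.
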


\smallskip

The proof of \autoref{t:main} is given at the end of \autoref{s:lbe}. Note that $\dist(y-x, \cV)=\dist(x-y, \cV)$ since $\cV$ is symmetric. By \eqref{e:main}, we see that if $\dist(y-x,\cV)\le t^{1/\alpha}$, then $p(t,x,y)$ is comparable to the heat kernel of $d$-dimensional isotropic $\alpha$-stable process.

\medskip

To express \eqref{e:main} in a different way, let us define set $S(\Gamma, x,y)$ for a symmetric cone $\Gamma$ and $x,y\in\R^d$ as follows: 
\begin{definition}\label{d:setD}
 Let $\Gamma$ be a symmetric cone and $x,y\in \Rd$. We define
\begin{align*}
&S(\Gamma, x,y)\\
&:=\begin{cases}
\{x,y\}&\text{if}\;\;y-x\in\Gamma,\\
\{z\in \partial\Gamma_x\cap \partial\Gamma_y:|x-z|+|y-z|\le |x-u|+|y-u|, u\in\Gamma_x\cap \Gamma_y \}&\text{if}\;\;y-x\notin\Gamma.
\end{cases}
\end{align*}
\end{definition}

Note that if $y-x\notin\Gamma$, then the Hunt process $X_{\Gamma}$ can not jump from $x$ to $y$ directly. However, $X_{\Gamma}$ which starts at $x$ can touch $y$ in two jumps through $z\in S(\Gamma, x,y)$. 
Since $\Gamma$ is a symmetric cone, we see that $S(\Gamma, x,y)$ has always two points.

\begin{remark}\label{r:setD}
Let $\Gamma=\Gamma(\lambda, \theta)$ be a symmetric cone and  $x,y\in\R^d$. 
\begin{itemize}
\item[(i)] Let $z, \bar z\in S(\Gamma, x,y)$ with $z\neq \bar z$. Then, we see that 
\begin{align*}
|x-z|=|y-\bar z|,\quad |x-\bar z|=|y-z|
\end{align*}
holds. Thus, 
\begin{align*}
|x-z|\vee|y-z|&=|x-\bar z|\vee|y-\bar z|,\nn\\
|x-z|\wedge|y-z|&=|x-\bar z|\wedge|y-\bar z|,
\end{align*}
where $a\wedge b:=\min \{a, b\}$ and $a\vee b:=\max\{a, b\}$ for $a, b\in \R$. 
\item[(ii)] There exists $C_1=C_1(\theta)>0$ such that for $z\in S(\Gamma, x,y)$
\begin{align}\label{dist_comp}
C_1(|x-z|\vee |y-z|)\le |x-y|\le 2(|x-z|\vee |y-z|).
\end{align}
\item[(iii)]
There exists $C_2=C_2(\theta)\ge1$ such that for any $x,y\in\Rd$ with $y-x\notin\Gamma$ and $z\in S(\Gamma, x,y)$, we have
\begin{align*}
\mbox{dist}(y-x, \Gamma)\le |y-z|\wedge |x-z|\le C_2\, \mbox{dist}(y-x, \Gamma).
\end{align*}
If $\theta=\pi/4$, then $C_2=1$.
\item[(iv)] Let $\Gamma_1\subset \Gamma_2$ be two symmetric cones. Then, for $x,y\in\Rd$ and $z_i\in S(\Gamma_i, x, y)$
\begin{align*}
|x-z_1|\wedge|y-z_1|\ge |x-z_2|\wedge|y-z_2|.
\end{align*}
\end{itemize}
\end{remark}

Let $\cV$ be a symmetric set defined by \eqref{general_set}. By \autoref{r:setD}, we see that there exists $C_*\ge1$ depending only on $\theta_1, \dots, \theta_n$ such that for any $x,y\in\Rd$, we have
\begin{align}\label{C_*}
C_*|x-y|\ge \dist(y-x, \cV).
\end{align}

By \autoref{t:main} and \autoref{r:setD}, we have the following:
\begin{corollary}\label{c:main}
Under the same assumptions in \autoref{t:main} with $\cV=\Gamma$ for a symmetric cone $\Gamma$,  there exists $C\ge1$ such that for any $(t,x,y)\in(0,\infty)\times\R^d\times\R^d$ and $z\in S(\Gamma, x,y)$,
\begin{align*}\begin{split}
&C^{-1}t^{-d/\alpha}\left(1\wedge\frac{t}{(|x-z|\vee|y-z|)^{\alpha}}\right)^{1+d/\alpha}\left(1\wedge\frac{t}{(|x-z|\wedge|y-z|)^{\alpha}}\right)\\
&\le p(t,x,y)
\le C t^{-d/\alpha}\left(1\wedge\frac{t}{(|x-z|\vee|y-z|)^{\alpha}}\right)^{1+d/\alpha}\left(1\wedge\frac{t}{(|x-z|\wedge|y-z|)^{\alpha}}\right).
\end{split}\end{align*}
\end{corollary}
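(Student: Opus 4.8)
The plan is to deduce \autoref{c:main} directly from \autoref{t:main} and \autoref{r:setD}; no new probabilistic input is required. Since $\cV=\Gamma$, the estimate \eqref{e:main} already sandwiches $p(t,x,y)$ between constant multiples of
\[
t^{-d/\alpha}\Big(1\wedge\frac{t}{|x-y|^{\alpha}}\Big)^{1+d/\alpha}\Big(1\wedge\frac{t}{(\dist(y-x,\Gamma))^{\alpha}}\Big),
\]
so it suffices to show that, for every $z\in S(\Gamma,x,y)$, the quantity $|x-y|$ is comparable to $|x-z|\vee|y-z|$ and the quantity $\dist(y-x,\Gamma)$ is comparable to $|x-z|\wedge|y-z|$, with comparison constants depending only on the aperture $\theta$. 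The transfer from the display above to the estimate of the corollary then rests on the elementary fact that if $c\ge1$ and $0<r\le r'\le cr$, then
\[
c^{-\alpha}\Big(1\wedge\frac{t}{(r')^{\alpha}}\Big)\le 1\wedge\frac{t}{r^{\alpha}}\le c^{\alpha}\Big(1\wedge\frac{t}{(r')^{\alpha}}\Big)
\]
for all $t>0$, applied to the second and third factors of \eqref{e:main} (the first factor being a power $1+d/\alpha$ of such a term), together with \autoref{r:setD}(i), which guarantees that $|x-z|\vee|y-z|$ and $|x-z|\wedge|y-z|$ take the same value at both points of $S(\Gamma,x,y)$, so that the resulting bound does not depend on which $z$ is chosen.

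For the first comparison I would simply invoke \autoref{r:setD}(ii), which gives $C_1(|x-z|\vee|y-z|)\le|x-y|\le 2(|x-z|\vee|y-z|)$ with $C_1=C_1(\theta)>0$ for every $x,y$ and every $z\in S(\Gamma,x,y)$ (in the case $y-x\in\Gamma$ this is trivial, since then $z\in\{x,y\}$ and $|x-z|\vee|y-z|=|x-y|$). For the second comparison I would split according to \autoref{d:setD}: if $y-x\notin\Gamma$, then \autoref{r:setD}(iii) yields $\dist(y-x,\Gamma)\le|x-z|\wedge|y-z|\le C_2\dist(y-x,\Gamma)$ with $C_2=C_2(\theta)\ge1$; if $y-x\in\Gamma$, then $\dist(y-x,\Gamma)=0$ while $z\in\{x,y\}$ forces $|x-z|\wedge|y-z|=0$, so with the convention $1\wedge t/0^{\alpha}=1$ (that is, simply dropping a factor that equals $1$ on both sides) the third factors of the two estimates coincide. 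In all cases the third factor of \eqref{e:main} and that of the corollary agree up to a multiplicative constant controlled by $C_2^{\alpha}$.

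Combining the two comparisons with the elementary inequality above, and absorbing all the resulting $\theta$-dependent constants together with the constant of \autoref{t:main}, yields the asserted two-sided bound with a single $C$ depending only on $d,\alpha,\kappa,\theta$. There is no analytic difficulty here; the only steps that need care are bookkeeping ones. First, one must normalize the constants extracted from \autoref{r:setD}(ii)--(iii) to a single $c\ge1$ and check that they are genuinely independent of $x,y,t$ — this is precisely why those items are phrased with constants depending on $\theta$ alone. Second, one must treat the degenerate case $|x-z|\wedge|y-z|=0$ (which occurs exactly when $\dist(y-x,\Gamma)=0$) consistently with the convention that $1\wedge t/(\dist(y-x,\Gamma))^{\alpha}$ and $1\wedge t/(|x-z|\wedge|y-z|)^{\alpha}$ are both understood to equal $1$ there. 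With these conventions fixed, \autoref{c:main} follows at once.
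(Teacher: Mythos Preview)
Your proposal is correct and follows exactly the approach the paper indicates: the paper states the corollary as an immediate consequence of \autoref{t:main} and \autoref{r:setD} without giving further details. You have simply spelled out the bookkeeping (invoking parts (ii) and (iii) of \autoref{r:setD} for the two comparisons and handling the degenerate case $y-x\in\Gamma$), which is precisely what is intended.
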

Let $z\in S(\Gamma, x,y)$ be a point such that $|x-z|\ge |y-z|$. Then, by \autoref{c:main}, we see that the product of two term $(1\wedge\frac{t}{|x-z|^{\alpha}})^{1+d/\alpha}$ and $(1\wedge\frac{t}{|y-z|^{\alpha}})$ appears for off-diagonal estimates. The first term is about the jump from $x$ to $z$ and the second term is about the jump from $z$ to $y$. We can also see this product expression in the heat kernel estimates for singular Markov processes. This is due to the fact that the anisotropic jump process which starts at $x$ needs more than one jump to touch $y$.  

\medskip

On the other hand, we can check that the heat kernel bounds in \autoref{c:main} are distinguished from that of singular processes since the two terms $(1\wedge\frac{t}{|x-z|^{\alpha}})$ and $(1\wedge\frac{t}{|y-z|^{\alpha}})$ have different orders even though the jump intensity is $d+\alpha$ for all jumps.  Recall that heat kernel for $d$-dimensional singular $\alpha$-stable process is comparable to 
$$\prod^{d}_{i=1}t^{-1/\alpha}\left(1\wedge\frac{t}{|x^i-y^i|^\alpha}\right)^{1+1/\alpha}$$ 
for $x=(x^1, \dots, x^d)$ and $y=(y^1, \dots, y^d)$. Thus, each term for the off-diagonal estimates has the same order. 

\medskip

Define the Green function of $X$ by
\begin{align*}
G(x,y):=\int^{\infty}_{0}p(t,x,y)\rd t.
\end{align*}
By \autoref{t:main}, we obtain the following Green function estimates:
\begin{corollary}\label{c:GFE}
Under the same assumptions in \autoref{t:main}, there exists $C\ge1$ such that for any $x,y\in\R^d$
\begin{align*}
\frac{C^{-1}}{|x-y|^{d-\alpha}}\le G(x,y)\le \frac{C}{|x-y|^{d-\alpha}}.
\end{align*}
\end{corollary}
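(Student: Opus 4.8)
The plan is to read off both inequalities directly from the two-sided heat kernel bounds \eqref{e:main} of \autoref{t:main} by integrating in $t$. Throughout, write $r:=|x-y|$ and $s:=\dist(y-x,\cV)$, and recall from \eqref{C_*} that $s\le C_*r$ (in fact $s\le r$, since $\cV$ contains points arbitrarily close to the origin). The two structural facts I will use are: first, $d/\alpha>1$, which holds because $d\ge2>\alpha$, so that $\int_a^{\infty}t^{-d/\alpha}\,\rd t=\tfrac{\alpha}{d-\alpha}a^{1-d/\alpha}<\infty$ for every $a>0$; second, on $(0,r^{\alpha})$ the quantity $t^{-d/\alpha}(1\wedge t/r^{\alpha})^{1+d/\alpha}$ equals $t\,r^{-d-\alpha}$, which is integrable near $t=0$.

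For the upper bound in \autoref{c:GFE}, I would discard the anisotropy factor in the upper estimate of \eqref{e:main} via $1\wedge\frac{t}{s^{\alpha}}\le 1$, obtaining
\begin{align*}
G(x,y)&\le C\int_0^{\infty}t^{-d/\alpha}\Big(1\wedge\frac{t}{r^{\alpha}}\Big)^{1+d/\alpha}\,\rd t\\
&= C\Big(\int_0^{r^{\alpha}}t\,r^{-d-\alpha}\,\rd t+\int_{r^{\alpha}}^{\infty}t^{-d/\alpha}\,\rd t\Big).
\end{align*}
The first integral equals $\tfrac12 r^{\alpha-d}$ and the second equals $\tfrac{\alpha}{d-\alpha}r^{\alpha-d}$ by the elementary computations above, so $G(x,y)\le C'r^{\alpha-d}$.

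For the matching lower bound I would integrate the lower estimate of \eqref{e:main} only over $t\ge C_*^{\alpha}r^{\alpha}$. For such $t$ one has $t\ge r^{\alpha}$ and $t\ge C_*^{\alpha}r^{\alpha}\ge s^{\alpha}$, hence $1\wedge\frac{t}{r^{\alpha}}=1$ and $1\wedge\frac{t}{s^{\alpha}}=1$, so that $p(t,x,y)\ge C^{-1}t^{-d/\alpha}$ there; integrating then gives $G(x,y)\ge C^{-1}\int_{C_*^{\alpha}r^{\alpha}}^{\infty}t^{-d/\alpha}\,\rd t=c\,r^{\alpha-d}$.

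I do not expect a genuine obstacle here: the argument is a routine time-integration, and the only points needing (minimal) care are the convergence of the $t$-integral — which is exactly where the hypotheses $d\ge2$ and $\alpha<2$ enter, since together they force $d-\alpha>0$ — and the harmless discarding of the factor $1\wedge\frac{t}{s^{\alpha}}$, reflecting the fact that the anisotropy only influences the short-time regime whereas $G(x,y)$ is governed by the contribution of times $t\gtrsim r^{\alpha}$.
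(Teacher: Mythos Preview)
Your proposal is correct and follows essentially the same route as the paper: for the upper bound you drop the anisotropy factor and split the time integral at $t=r^{\alpha}$, and for the lower bound you integrate only over large times where both cutoff factors equal $1$, using $d>\alpha$. The only cosmetic difference is that the paper uses the sharper inequality $s\le r$ (which you also note) to take $r^{\alpha}$ rather than $C_*^{\alpha}r^{\alpha}$ as the lower limit in the last step.
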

The proof of \autoref{c:GFE} is given at the end of \autoref{s:lbe}. By \autoref{c:GFE}, we see that the Green function of $X$ has the same estimates with that of isotropic $\alpha$-stable processes in $\R^d$. Thus, we see that there exist Markov processes such that the  corresponding Green functions are comparable but the corresponding heat kernels are not. 

\medskip

This article is organized as follows: 
In \autoref{s:upper}, we obtain near-diagonal upper bound for the heat kernel by using L\'evy process. Moreover, we prove two important auxiliary results. The first one is rough upper bounds for the heat kernel and the second one is a survival estimate. See \autoref{t:uhk}. In \autoref{s:sub}, we show sharp upper bounds for the heat kernel using the rough upper bounds and L\'evy system. 
In \autoref{s:lbe}, we give the lower bound estimates and the proof of \autoref{t:main} and \autoref{c:GFE}.

\medskip

\subsection*{Notation} 
For two non-negative functions $f$ and $g$, the notation $f\asymp g$ means that there are positive constants $c_1$ and $c_2$ such that $c_1g(x)\leq f (x)\leq c_2 g(x)$ in the common domain of definition for $f$ and $g$. Recall that  $a, b\in \R$, we use $a\wedge b$ for $\min \{a, b\}$ and $a\vee b$ for $\max\{a, b\}$. For any $x\in\R^d$ and $r>0$, $B(x,r):=\{y\in\R^d: |y-x|<r\}$.

\section{Upper bound estimates and auxiliary results}\label{s:upper}

We first show near-diagonal upper bounds for the heat kernel. Recall that $(\scE, \scF)$ is the Dirichlet form given in \eqref{def:DF_Levy}.
\begin{lemma}
There exists a constant $c>0$ such that for any $f\in \scF\cap L^1(\R^{d})$
\begin{align}\label{e:Nash_Levy}
\|f\|^{2+2(\alpha/d)}_{2}\le c \,\scE(f,f)\|f\|_1^{2(\alpha/d)}.
\end{align}
\end{lemma}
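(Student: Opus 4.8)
The plan is to recognize $(\scE,\scF)$ as the Dirichlet form of a L\'evy process, pass to the Fourier side, and reduce \eqref{e:Nash_Levy} to a one-line frequency-splitting argument; the only substantial input is a pointwise lower bound on the associated characteristic exponent.

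First I would use that $J^{\alpha}(x,y)=\nu(x-y)$ with $\nu(h):=\1_{\cV}(h)|h|^{-d-\alpha}$ depends only on $x-y$. Substituting $h=y-x$ and applying Plancherel to the inner integral (with $\widehat f(\xi)=\int_{\R^d}f(x)\e^{-i\xi\cdot x}\,\rd x$) gives, for a dimensional constant $c_d>0$,
\begin{align*}
\scE(f,f)&=\int_{\R^d}\|f(\cdot+h)-f\|_2^2\,\nu(h)\,\rd h=c_d\int_{\R^d}|\widehat f(\xi)|^2\,\psi(\xi)\,\rd\xi,\\
\psi(\xi)&:=2\int_{\R^d}\big(1-\cos(\xi\cdot h)\big)\nu(h)\,\rd h,
\end{align*}
where the interchange of integrals is legitimate by Tonelli since all integrands are nonnegative.

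The key step is the bound
\begin{align}\label{e:psilb}
\psi(\xi)\ \ge\ c_0\,|\xi|^{\alpha}\qquad\text{for all }\xi\in\R^d,
\end{align}
for some $c_0>0$. To establish \eqref{e:psilb} I would check three things. (a) $\psi$ is finite and continuous: near $h=0$ one has $1-\cos(\xi\cdot h)\le\tfrac12|\xi|^2|h|^2$, so the integrand is $O(|\xi|^2|h|^{2-d-\alpha})$, integrable because $\alpha<2$; near infinity $1-\cos\le2$ gives integrability because $\alpha>0$; these bounds also provide, on any ball $\{|\xi|\le M\}$, an integrable dominating function, so continuity follows from dominated convergence. (b) $\psi$ is homogeneous of degree $\alpha$: since $\cV$ is a cone, $\1_{\cV}(h/r)=\1_{\cV}(h)$ for $r>0$, and the change of variables $h\mapsto h/r$ yields $\psi(r\xi)=r^{\alpha}\psi(\xi)$. (c) $\psi(\xi)>0$ for $\xi\neq0$: if $\psi(\xi)=0$ then $\cos(\xi\cdot h)=1$ for a.e.\ $h\in\cV$, forcing $\cV$ to lie, up to a Lebesgue-null set, inside the countable union of hyperplanes $\{h:\xi\cdot h\in2\pi\Z\}$, which contradicts the fact that each $\Gamma_i$, and hence $\cV$, is a solid cone of positive Lebesgue measure. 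Combining (a)--(c) with the compactness of $\bS(d-1)$ gives \eqref{e:psilb} with $c_0:=\min_{\omega\in\bS(d-1)}\psi(\omega)>0$.

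Given \eqref{e:psilb}, I would finish with the standard Nash argument: we may assume $\scE(f,f)\in(0,\infty)$, since if $\scE(f,f)=0$ then $f$ is constant, hence $0$ in $L^1$, and \eqref{e:Nash_Levy} is trivial. Using $\|\widehat f\|_{\infty}\le\|f\|_1$, $|B(0,R)|\asymp R^d$, Plancherel, the Fourier identity for $\scE$, and \eqref{e:psilb}, for every $R>0$
\begin{align*}
\|f\|_2^2\ \asymp\ \int_{|\xi|\le R}|\widehat f(\xi)|^2\,\rd\xi+\int_{|\xi|>R}|\widehat f(\xi)|^2\,\rd\xi\ \le\ c\,\|f\|_1^2R^{d}+c\,R^{-\alpha}\,\scE(f,f).
\end{align*}
Taking $R=\big(\scE(f,f)/\|f\|_1^2\big)^{1/(d+\alpha)}$ balances the two terms and gives $\|f\|_2^2\le c\,\scE(f,f)^{d/(d+\alpha)}\|f\|_1^{2\alpha/(d+\alpha)}$; raising both sides to the power $(d+\alpha)/d$ is exactly \eqref{e:Nash_Levy}. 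The one genuinely delicate point is \eqref{e:psilb}, and in particular the strict positivity of $\psi$, which is precisely where the hypothesis that $\cV$ is a union of solid cones — rather than a lower-dimensional set, as in the singular stable case — is used.
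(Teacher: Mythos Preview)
Your proof is correct. Both you and the paper reduce to the lower bound $\psi(\xi)\ge c_0|\xi|^{\alpha}$ on the characteristic exponent, but the arguments diverge in two places. First, for the lower bound itself: the paper proceeds constructively, exhibiting for each $\xi$ a sub-cone $\Gamma_*\subset\cV$ on which $|\xi\cdot z|\ge c|\xi||z|$ (the axis of $\Gamma_*$ is chosen according to whether $\xi$ is close to or far from the axis of $\Gamma_1$), and then integrates the elementary bound $(1-\cos u)\ge(1-\cos 1)u^2$ over $\Gamma_*\cap\{|z|<1/|\xi|\}$; you instead observe that $\psi$ is continuous, $\alpha$-homogeneous (because $\cV$ is a cone), and strictly positive on $\bS(d-1)$ (because $\cV$ has positive Lebesgue measure), and take the minimum over the compact sphere. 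Second, for the passage from the exponent bound to Nash: the paper bounds the heat kernel $q(t,x)\le ct^{-d/\alpha}$ via Fourier inversion and then invokes \cite[Theorem~2.1]{CKS87}, whereas you run the classical frequency-splitting argument directly. Your route is shorter and fully self-contained; the paper's route yields the on-diagonal heat kernel bound for the L\'evy process as a by-product, and its treatment of $\psi$ produces more explicit constants.
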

\begin{proof}
Let $Y$ be the symmetric L\'evy process associated with $(\scE, \scF)$. Then, the characteristic function $\phi$ of $Y$ is given by
\begin{align*}
\phi(\xi):=\int_{\Rd}(1-\cos(\xi\cdot z))\frac{\1_{\cV}(z)}{|z|^{d+\alpha}}\rd z=\int_{\cV}(1-\cos(\xi\cdot z))\frac{1}{|z|^{d+\alpha}}\rd z.
\end{align*}
Since $(1-\cos u)\ge (1-\cos1)u^2$ for $|u|\le1$, we see that
\begin{align*}
\phi(\xi)\ge (1-\cos1)\int_{\cV\cap \{|z|<1/|\xi|\}}\frac{|\xi\cdot z|^2}{|z|^{d+\alpha}}\rd z.
\end{align*}
Recall that $\Gamma_1=\Gamma(\lambda_1, \theta_1)$ is a symmetric cone satisfying $\Gamma_1\subset\cV$. Let $\Gamma_1'=\Gamma(\lambda_1, \theta_1')$ be a symmetric cone such that $\theta_1'\le\theta_1\wedge \pi/8$. Then, $\Gamma_1'\subset \Gamma_1$. 
Define
\begin{align*}
\Gamma_*:=
\begin{cases}
\Gamma_1',&\quad\text{if}\;\;\xi\in\Gamma(\lambda_1, \pi/4);\\
\Gamma_1'\cap \Gamma(\lambda_0, \theta_1'/4),&\quad\text{otherwise},
\end{cases}
\end{align*} 
where $\lambda_0\in\partial\Gamma_1'\cap\bS(d-1)$ such that $\xi\cdot\lambda_0\ge \xi\cdot\lambda'$ for all $\lambda'\in\partial\Gamma_1'\cap\bS(d-1)$.
Then, we can see that $|\xi\cdot z|\ge c_0|\xi||z|$ holds for all $z\in\Gamma_*$, where $c_0=\cos(3\pi/8)\wedge\cos(\pi/2-3\theta_1'/4)>0$. Thus,
\begin{align*}
\phi(\xi)&\ge (1-\cos1)\int_{\Gamma_*\cap \{|z|<1/|\xi|\}}\frac{|\xi\cdot z|^2}{|z|^{d+\alpha}}\rd z\\
&\ge c_0^2(1-\cos1)|\xi|^2\int_{\Gamma\cap \{|z|<1/|\xi|\}}\frac{|z|^2}{|z|^{d+\alpha}}\rd z=c_1|\xi|^{\alpha}.
\end{align*}
Note that $c_1>0$ is independent of $\xi$.
Thus, we have for any $t>0$
\begin{align}\label{ch_ftn}
\int_{\Rd}e^{-t \phi(\xi)}\rd \xi\le \int_{\Rd}e^{-c_1t |\xi|^{\alpha}}\rd \xi=t^{-d/\alpha}\int_{\Rd}e^{-c_1|z|^{\alpha}}\rd z=c_2t^{-d/\alpha}.
\end{align}
For $t>0$ and $x,y\in\Rd$, let $q(t, x, y):=q(t,y-x)$ be the heat kernel for $Y$, where 
\begin{align*}
q(t,x):=\frac{1}{(2\pi)^d}\int_{\Rd}e^{-t \phi(\xi)}e^{-i\langle x, \xi\rangle}\rd \xi.
\end{align*}
Then, by \eqref{ch_ftn}, we see that for any $t>0$ and $x\in\R^d$
\begin{align*}
q(t,x)\le \frac{1}{(2\pi)^d}\int_{\Rd}e^{-t \phi(\xi)}\rd \xi\le \frac{c_2}{(2\pi)^d}t^{-d/\alpha}.
\end{align*}
Thus, by \cite[Theorem 2.1]{CKS87}, we obtain \eqref{e:Nash_Levy}.
\end{proof}

\medskip

\begin{proposition}\label{p:uhkd}
(i) There exists a constant $c>0$ such that for any $f\in \cF\cap L^1(\R^{d})$
\begin{align}\label{e:Nash}
\|f\|^{2+2(\alpha/d)}_{2}\le c \,\cE(f,f)\|f\|_1^{2(\alpha/d)}.
\end{align}
(ii) There is a properly exceptional set $\cN$ of $X$, a positive symmetric kernel $p(t,x,y)$ defined on $(0,\infty)\times (\bR^{d}\setminus\cN)\times(\bR^{d}\setminus\cN)$, and a constant $C>0$ such that $\bE^x[f(X_t)]= \int_{\R^{d}}p(t,x,y)f(y)\rd y$, and
\begin{align}\label{e:uhkd}
p(t,x,y)\le C t^{-d/\alpha}
\end{align}
for every $x,y\in\bR^{d}\setminus\cN$ and for every $t>0.$
\end{proposition}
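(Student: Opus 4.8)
The plan is to obtain part (i) directly from the previous lemma and the form comparability \eqref{E_comp}, and to derive part (ii) from the Nash inequality \eqref{e:Nash} via Carlen--Kusuoka--Stroock ultracontractivity together with the standard theory of regular Dirichlet forms.

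For (i): since $\cF=\scF$ and \eqref{E_comp} gives $\scE(f,f)\le\kappa\,\cE(f,f)$, I would apply \eqref{e:Nash_Levy} to $f\in\cF\cap L^1(\R^d)=\scF\cap L^1(\R^d)$ to get
\[
\|f\|_2^{2+2(\alpha/d)}\le c\,\scE(f,f)\,\|f\|_1^{2(\alpha/d)}\le c\kappa\,\cE(f,f)\,\|f\|_1^{2(\alpha/d)},
\]
which is \eqref{e:Nash} with constant $c\kappa$.

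For (ii): let $(P_t)_{t>0}$ be the symmetric sub-Markovian $L^2(\R^d)$-semigroup associated with $(\cE,\cF)$. By \cite[Theorem 2.1]{CKS87}, the Nash inequality \eqref{e:Nash} is equivalent to the ultracontractive bound $\|P_tf\|_\infty\le C\,t^{-d/\alpha}\|f\|_1$ for all $t>0$ and $f\in L^1(\R^d)\cap L^2(\R^d)$; in particular $P_{t/2}$ maps $L^2(\R^d)$ into $L^\infty(\R^d)$ with operator norm at most $C'\,t^{-d/(2\alpha)}$, so for each $t>0$ the operator $P_t=P_{t/2}\circ P_{t/2}$ is an integral operator with a symmetric measurable kernel $(x,y)\mapsto p(t,x,y)$ satisfying $0\le p(t,x,y)\le C\,t^{-d/\alpha}$ for a.e.\ $(x,y)$ and $P_tf(x)=\int_{\R^d}p(t,x,y)f(y)\,\rd y$. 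It then remains to promote this to an everywhere-defined kernel compatible with the Hunt process: since $(\cE,\cF)$ is a regular Dirichlet form, standard Dirichlet form theory (see, e.g., Fukushima--Oshima--Takeda, or the construction in the references of the paper) supplies a properly exceptional set $\cN$ so that $X$ restricted to $\R^d\setminus\cN$ has transition semigroup $P_t$ and one may choose measurable densities $p(t,x,\cdot)$ for the transition kernels of $X$ off $\cN$; setting $p(t,x,y):=\int_{\R^d}p(t/2,x,z)\,p(t/2,z,y)\,\rd z$ (Chapman--Kolmogorov) gives a well-defined nonnegative symmetric kernel on $(0,\infty)\times(\R^d\setminus\cN)\times(\R^d\setminus\cN)$ with $\bE^x[f(X_t)]=\int_{\R^d}p(t,x,y)f(y)\,\rd y$. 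Finally, using $\|p(t/2,x,\cdot)\|_2\le C'\,t^{-d/(2\alpha)}$ (the $L^2\to L^\infty$ bound) and Cauchy--Schwarz together with symmetry,
\[
p(t,x,y)=\int_{\R^d}p(t/2,x,z)\,p(t/2,y,z)\,\rd z\le\|p(t/2,x,\cdot)\|_2\,\|p(t/2,y,\cdot)\|_2\le C\,t^{-d/\alpha}
\]
for every $x,y\in\R^d\setminus\cN$ and $t>0$, which is \eqref{e:uhkd}.

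The routine parts are (i) and the ultracontractivity step; the one delicate point is the last paragraph, namely passing from an operator identity valid $L^1$-a.e.\ to a pointwise estimate for a genuine kernel defined off a single properly exceptional set and consistent with $X$. This requires the usual measure-theoretic care in Dirichlet form theory — selecting good versions of the densities, verifying Chapman--Kolmogorov off $\cN$, and producing $\cN$ — but introduces no new idea beyond what is already available once \eqref{e:Nash} is established.
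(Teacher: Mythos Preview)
Your proposal is correct and follows essentially the same route as the paper: part (i) is exactly the paper's argument (combine \eqref{e:Nash_Levy} with \eqref{E_comp}), and for part (ii) the paper likewise invokes \cite[Theorem 2.1]{CKS87} for ultracontractivity and then appeals to \cite[Theorem 3.1]{BBCK09} as a black box for the construction of the properly exceptional set $\cN$ and the everywhere-defined symmetric kernel satisfying \eqref{e:uhkd}. What you sketch in your last two paragraphs is precisely the content of that BBCK result, so your unpacking is consistent with, and not a genuine departure from, the paper's proof.
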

\begin{proof}
(i) By \eqref{e:Nash_Levy} and \eqref{E_comp}, we obtain \eqref{e:Nash}.

(ii) Using \eqref{e:Nash}, \cite[Theorem 2.1]{CKS87} and \cite[Theorem 3.1]{BBCK09}, the result follows.

\end{proof}

\medskip

\begin{remark}\label{r:Holder}
In \cite{FK13},  the H\"older continuity for solutions to parabolic equations was discussed. It is easy to see that the jump kernel $J(x,y)$ satisfies the conditions $\mathrm{K}_1$, $\mathrm{K}_2$ and $\mathrm{K}_3$ in \cite{FK13}. Since $p(t,x,y)$ is parabolic function, by \cite[Theorem 1.2]{FK13},  we may assume that $p(t,x,y)$ is H\"older continuous and $\cN=\emptyset$, where $\cN$ is the properly exceptional set in \autoref{p:uhkd}(ii). For more general results on H\"older estimates for nonlocal operators, see \cite{CKWb19}. 
\end{remark}

\medskip

Now we will give rough upper bound for the heat kernel. For ${\delta}>0$, define
$$J_{\delta}(x,y):=J(x,y)\1_{\{|x-y|\le {\delta}\}}$$
and for $u,v\in\cF$,
\begin{align*}
\cE_\delta(u,v)&:=\int_{\R^{d}}\int_{\R^{d}}(u(y)-u(x))(v(y)-v(x))J_\delta(x, y)\rd y \rd x.
\end{align*}
Let $X^{\delta}$ be the Markov process associated with $(\cE_\delta, \cF)$ and $p^{\delta}(t,x,y)$ be the heat kernel for $X^{\delta}$. 
\begin{lemma}\label{l:J_outball}
There exists $C>0$ such that for any $x\in\R^{d}$,
\begin{align*}
C^{-1}\delta^{-\alpha}\le \int_{\R^{d}}\big(J(x,y)-J_{\delta}(x,y)\big)\rd y\le C\delta^{-\alpha}.
\end{align*}
\end{lemma}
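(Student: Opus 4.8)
The plan is to compute the integral $\int_{\R^d}\big(J(x,y)-J_\delta(x,y)\big)\,\rd y$ directly by comparing it with the corresponding integral for the explicit kernel $J^\alpha$, using the two-sided bound \eqref{J_comp}. Since $\kappa^{-1}J^\alpha(x,y)\le J(x,y)\le\kappa J^\alpha(x,y)$ and $J-J_\delta = J\cdot\1_{\{|x-y|>\delta\}}$ (and likewise for $J^\alpha$), it suffices to show that
\begin{align*}
\int_{\R^d}\big(J^\alpha(x,y)-J^\alpha_\delta(x,y)\big)\,\rd y = \int_{\{z\in\cV:\,|z|>\delta\}}\frac{1}{|z|^{d+\alpha}}\,\rd z
\end{align*}
is comparable to $\delta^{-\alpha}$ with constants independent of $x$; the bound for $J$ then follows by multiplying by $\kappa^{\pm1}$. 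Note the right-hand side does not depend on $x$ at all after the substitution $z=x-y$, so the main content is a scaling computation.

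The key steps: first, change variables $z = x-y$ to reduce the integral to $I(\delta):=\int_{\cV\cap\{|z|>\delta\}}|z|^{-d-\alpha}\,\rd z$. Second, pass to polar coordinates $z = r\omega$ with $r>0$, $\omega\in\bS(d-1)$: writing $\sigma(\cV):=\sigma\big(\cV\cap\bS(d-1)\big)$ for the surface measure of the angular cross-section of $\cV$, we get
\begin{align*}
I(\delta) = \sigma(\cV)\int_\delta^\infty r^{-d-\alpha}\,r^{d-1}\,\rd r = \sigma(\cV)\int_\delta^\infty r^{-1-\alpha}\,\rd r = \frac{\sigma(\cV)}{\alpha}\,\delta^{-\alpha}.
\end{align*}
Third, observe $0<\sigma(\cV)<\infty$: finiteness is clear since $\bS(d-1)$ has finite surface measure, and positivity holds because $\cV\supset\Gamma_1=\Gamma(\lambda_1,\theta_1)$ with $\theta_1\in(0,\pi/2]$, so $\cV\cap\bS(d-1)$ contains a nonempty (relatively) open spherical cap and hence has strictly positive surface measure. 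Setting $C = \kappa\big(\sigma(\cV)/\alpha \vee \alpha/\sigma(\cV)\big)$ (or simply taking the max with its reciprocal) yields the claimed two-sided bound $C^{-1}\delta^{-\alpha}\le \int_{\R^d}(J-J_\delta)(x,y)\,\rd y\le C\delta^{-\alpha}$ uniformly in $x\in\R^d$.

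There is essentially no hard obstacle here — the statement is a routine scaling estimate and the only point requiring a word of justification is that the angular measure $\sigma(\cV)$ is strictly positive and finite, which I would state explicitly as above so that the constant $C$ is genuinely finite and positive. One could alternatively avoid polar coordinates entirely by a dyadic decomposition $\cV\cap\{2^k\delta<|z|\le 2^{k+1}\delta\}$ and summing a geometric series, but the polar-coordinate computation is cleanest. I expect the entire proof to be two or three lines once the change of variables is written down.
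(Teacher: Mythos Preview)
Your proposal is correct and follows essentially the same approach as the paper: reduce to the explicit kernel $J^\alpha$ via \eqref{J_comp} and compute the resulting radial integral over $\{|z|>\delta\}$. The only cosmetic difference is that the paper treats the two bounds separately---dropping $\1_{\cV}$ altogether for the upper bound and restricting to a single cone $\Gamma_1$ for the lower bound---whereas you keep $\1_{\cV}$ throughout and compute the exact value $\sigma(\cV)\alpha^{-1}\delta^{-\alpha}$ via polar coordinates; both routes are equally short and yield the same conclusion.
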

\begin{proof}
Using \eqref{J_comp},
\begin{align*}
&\int_{\R^{d}}\big(J(x,y)-J_{\delta}(x,y)\big) \rd y=\int_{|y-x|>\delta}J(x,y) \rd y\le \kappa\int_{|y-x|>\delta}\frac{1}{|x-y|^{d+\alpha}}\rd y=c\delta^{-\alpha}.
\end{align*}
On the other hand,
\begin{align*}
&\int_{\R^{d}}\big(J(x,y)-J_{\delta}(x,y)\big) \rd y=\int_{|y-x|>\delta}J(x,y) \rd y\ge \kappa^{-1}\int_{|y-x|>\delta}\frac{\1_{\Gamma_1}(x-y)}{|x-y|^{d+\alpha}}\rd y=c\delta^{-\alpha}.
\end{align*}

\end{proof}

\medskip

Using \autoref{l:J_outball},
\begin{align*}
\cE(u,u)-\cE_\delta(u,u)&=\int_{\R^{d}}\int_{\R^{d}}(u(x)-u(y))^2J(x,y)\1_{\{|x-y|>\delta\}}\,\rd y\rd x\\
&\le 4\int_{\R^{d}}u(x)^2\,\rd x \sup_x\int_{|y-x|>\delta}J(x,y)\,\rd y\\
&\le c\, \delta^{-\alpha}\|u\|_2^2.
\end{align*}
Thus, we obtain by \eqref{e:Nash} and \cite[Theorem 3.25]{CKS87} that
\begin{align}\label{p_trunc1}
p^{\delta}(t,x,y)\le c t^{-d/\alpha}e^{c_1 t \delta^{-\alpha}-E_\delta(2t,x,y)},
\end{align}
where
\begin{align*}
\Upsilon_{\delta}(f)(x)&:=\int_{\bR^{d}} (e^{f(x)-f(y)}-1)^2J_{\delta}(x,y)\rd y\\
\Lambda_{\delta}(f)^2&:=\|\Upsilon_{\delta}(f)\|_{\infty}\vee\|\Upsilon_{\delta}(-f)\|_{\infty},\\
E_{\delta}(t,x,y)&:=\sup\Big\{|f(x)-f(y)|-t\Lambda_{\delta}(f)^2: f\in \text{Lip}_c(\bR^d)\; \text{with}\;\Lambda_{\delta}(f)<\infty\Big \}.
\end{align*}

\begin{theorem}\label{t:uhk}
(i) There exists a constant $C>0$ such that for any $t>0$ and $x,y\in\R^d$
\begin{align}\label{e:uhk}
p(t,x,y)\le C t^{-d/\alpha}\left(1\wedge \frac{t}{|x-y|^{\alpha}}\right)^{1+d/\alpha}.
\end{align}
(ii) There exists a constant $c>0$ such that for any $t,r>0$ and $x\in\R^d$
\begin{align}\label{exit_est}
\bP^x(\tau_{B(x,r)}\le t)\le ctr^{-\alpha}.
\end{align}
\end{theorem}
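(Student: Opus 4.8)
The plan is to derive both parts of Theorem~\ref{t:uhk} from the truncated estimate \eqref{p_trunc1} together with the standard comparison between the full process $X$ and the truncated process $X^\delta$ via the L\'evy system. For part (i), the starting point is the inequality
\begin{align*}
p^{\delta}(t,x,y)\le c t^{-d/\alpha}e^{c_1 t \delta^{-\alpha}-E_\delta(2t,x,y)},
\end{align*}
so the heart of the matter is to produce a good lower bound for $E_\delta(2t,x,y)$, i.e.\ to exhibit a test function $f\in\mathrm{Lip}_c(\R^d)$ with $|f(x)-f(y)|$ large and $\Lambda_\delta(f)$ controlled. The natural choice is $f(z)=\beta\bigl((R-|z-x|)^+\wedge R\bigr)$ for suitable $\beta>0$ and $R\asymp|x-y|$, so that $|f(x)-f(y)|=\beta R$. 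Since $f$ is $\beta$-Lipschitz and $J_\delta$ is supported on $\{|z-w|\le\delta\}\cap(\cV_z)$, one has $(e^{f(z)-f(w)}-1)^2\le c\beta^2|z-w|^2e^{2\beta\delta}$ on the support, and integrating against $J_\delta(z,w)\le\kappa|z-w|^{-d-\alpha}\1_{\{|z-w|\le\delta\}}$ gives $\Upsilon_\delta(f)(z)\le c\beta^2\delta^{2-\alpha}e^{2\beta\delta}$, hence $\Lambda_\delta(f)^2\le c\beta^2\delta^{2-\alpha}e^{2\beta\delta}$. Therefore
\begin{align*}
E_\delta(2t,x,y)\ge \beta R-2ct\beta^2\delta^{2-\alpha}e^{2\beta\delta}.
\end{align*}
One then makes the usual choice: take $\delta=\beta^{-1}$ (so $e^{2\beta\delta}$ is an absolute constant), optimize the resulting bound $\beta R-c't\beta^{3-\alpha}$ in $\beta$ — but the cleaner route, which I would follow, is to distinguish the two regimes $|x-y|^\alpha\le t$ and $|x-y|^\alpha>t$. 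In the first regime \eqref{e:uhkd} already gives $p(t,x,y)\le Ct^{-d/\alpha}$, which is exactly the claimed bound. In the second regime choose $\delta$ so that the exponent $\beta R-c t\delta^{-\alpha}$ is of order $-|x-y|/\delta$; concretely set $\delta=\theta|x-y|$ with $\theta$ small and $\beta\asymp\delta^{-1}\log(|x-y|^\alpha/t)$, which yields $p^\delta(2t,x,y)\le ct^{-d/\alpha}(t/|x-y|^\alpha)^N$ for any prescribed power $N$, in particular $N=1+d/\alpha$.

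The second ingredient is passing from $X^\delta$ back to $X$. Since $X$ and $X^\delta$ differ only by the ``long jumps'' whose total rate is $\asymp\delta^{-\alpha}$ by \autoref{l:J_outball}, a Meyer-type decomposition (or the argument of \cite[Theorem 3.1]{BBCK09} / \cite[\S3]{CKS87}) gives, for $|x-y|$ comparable to $\delta$,
\begin{align*}
p(t,x,y)\le p^{\delta}(t,x,y)+c\,t\,\delta^{-\alpha}\sup_{s\le t}\,\bigl(\textstyle\sup_{z,w}p^{\delta}(s,z,w)\bigr)\cdot(\text{tail factor}),
\end{align*}
and more precisely one writes $p(t,x,y)\le p^\delta(t,x,y)+\bE^x[\,t\,;\,\text{first long jump before }t\,]$ with the long-jump contribution bounded by $c\,t\,\delta^{-\alpha}\cdot \bigl(\sup_z\int_{|w-z|>\delta}J(z,w)\,p(t,w,y)\,dw\bigr)$-type terms; combined with $\delta\asymp|x-y|$ and the near-diagonal bound \eqref{e:uhkd}, this reproduces \eqref{e:uhk}. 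This is the standard iteration for stable-like jump kernels and goes through here verbatim because the only property used is the on-diagonal bound \eqref{e:uhkd}, the Nash inequality \eqref{e:Nash}, and the two-sided tail-mass estimate of \autoref{l:J_outball}; the anisotropy of $\cV$ never enters, since at this stage we are only claiming the \emph{isotropic} bound \eqref{e:uhk} (the sharper $\dist(y-x,\cV)$ factor is deferred to \autoref{s:sub}).

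For part (ii), the exit-time estimate follows from (i) by a routine argument. By the strong Markov property and the L\'evy system formula, for $y\in B(x,r/2)$,
\begin{align*}
\bP^x\bigl(\tau_{B(x,r)}\le t,\ X_t\notin B(x,r/2)\bigr)
\end{align*}
is estimated by $\int_{B(x,r/2)^c}\!\int_0^t\! (\text{density})\,ds$, while on the complementary event $X_t\in B(x,r/2)$ one uses that $X$ has left $B(x,r)$ and returned, so $|X_{t}-X_{\tau}|\ge r/2$ is controlled by the jump kernel mass $\int_{|w-z|>r/2}J(z,w)\,dw\asymp r^{-\alpha}$ integrated over time. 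The cleanest packaging: $\bP^x(\tau_{B(x,r)}\le t)\le \bP^x(X_{2t}\notin B(x,r/2)) + \sup_{s\le 2t,\,z\in\partial B}\bP^z(X_{2t-s}\in B(x,r/2))$; the first probability is bounded by $\int_{|y-x|\ge r/2}p(2t,x,y)\,dy\le c\,t\,r^{-\alpha}$ using \eqref{e:uhk} and $\int_{|v|\ge \rho}|v|^{-d-\alpha}\,dv\asymp\rho^{-\alpha}$, and the second, being a ``come back from distance $r/2$'' probability, is likewise $\le c\,t\,r^{-\alpha}$ by the L\'evy system. I would simply cite \cite[Lemma~3.1]{BBCK09} or the corresponding statement in \cite{CK03,CKK08} for this deduction.

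\textbf{Main obstacle.} The technically delicate point is the optimization in part~(i): choosing $\delta=\delta(t,|x-y|)$ and the slope $\beta$ of the test function so that the exponent in \eqref{p_trunc1} beats \emph{every} polynomial power of $t/|x-y|^\alpha$ that one needs — here specifically the power $1+d/\alpha$ — while keeping $e^{c_1 t\delta^{-\alpha}}$ under control. This is exactly the Carlen--Kusuoka--Stroock scheme and the bookkeeping is standard, but it is where all the constants are generated; everything else (the Meyer-type splitting, the exit estimate) is a soft consequence of \autoref{l:J_outball}, \eqref{e:Nash}, and \eqref{e:uhkd}.
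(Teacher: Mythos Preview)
Your approach is the same as the paper's: Davies--CKS test function $\psi(\xi)=\eta(R-|\xi-x|)^+$ with $R=|x-y|$, $\delta$ a fixed small multiple of $R$, and $\eta\asymp\delta^{-1}\log(\delta^\alpha/t)$, yielding $p^\delta(t,x,y)\le c\,t/|x-y|^{d+\alpha}$ when $t\lesssim\delta^\alpha$; then Meyer's construction to recover $p$; and part~(ii) as a standard consequence of~(i).

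One point in your write-up needs correction. The Meyer comparison you display,
\[
p(t,x,y)\le p^{\delta}(t,x,y)+c\,t\,\delta^{-\alpha}\sup_{s\le t}\sup_{z,w}p^{\delta}(s,z,w)\cdot(\text{tail factor}),
\]
does not give the right bound: $\sup_{s\le t}\sup_{z,w}p^\delta(s,z,w)$ is infinite (it behaves like $s^{-d/\alpha}$ as $s\downarrow0$), and even formally the combination $t\cdot\delta^{-\alpha}\cdot t^{-d/\alpha}$ produces $t^{1-d/\alpha}|x-y|^{-\alpha}$, not $t|x-y|^{-d-\alpha}$. What is actually used (and what the paper invokes) is the \emph{pointwise} Meyer bound
\[
p(t,x,y)\le p^{\delta}(t,x,y)+t\,\sup_{\xi,\zeta}\bigl(J(\xi,\zeta)-J_\delta(\xi,\zeta)\bigr)
=p^{\delta}(t,x,y)+t\,\sup_{|\xi-\zeta|>\delta}J(\xi,\zeta)\le p^{\delta}(t,x,y)+\kappa t\,\delta^{-d-\alpha},
\]
i.e.\ the supremum of the long-jump \emph{kernel}, not its integrated tail mass $\delta^{-\alpha}$ from \autoref{l:J_outball}. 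With $\delta\asymp|x-y|$ this immediately gives $p(t,x,y)\le c\,t/|x-y|^{d+\alpha}$. Once this is fixed, your argument is complete and coincides with the paper's.
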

\begin{proof}
(i) Fix $x,y\in\R^d$ and let $R:=|x-y|$. Let ${\delta}=\frac{R \alpha}{3(d+\alpha)}$. If $2t\ge \delta^{\alpha}$, then
\eqref{e:uhk} follows from \eqref{e:uhkd}. Thus, for the rest of the proof, we assume that $2t<\delta^{\alpha}$.
Let $\eta=\tfrac{1}{3\delta}\log({{\delta}^{\alpha}}/{t})$ and define
$$\psi(\xi)=\eta(R-|\xi-x|)\vee 0,\quad \text{for}\;\;\xi\in\R^{d}.$$
Then, by \eqref{J_comp}, 
\begin{align*}
\Upsilon_{\delta}({\psi})(\xi)&= \int_{\R^d}\Big(e^{\psi(\xi)-\psi(\zeta)}-1\Big)^2J_\delta(\xi, \zeta)\rd \zeta\nn\\
&\le\kappa\int_{|\xi-\zeta|\le\delta}\Big(e^{\psi(\xi)-\psi(\zeta)}-1\Big)^2J^{\alpha}(\xi, \zeta)\rd \zeta\nn\\
&=:\kappa I.
\end{align*}
For $\xi, \zeta\in\R^{d}$, we have $|\psi(\xi)-\psi(\zeta)|\le \eta|\xi-\zeta|$.
Using this and $(e^{s}-1)^2\le s^2e^{2|s|}$ for all $s\in \R$, we see that 
\begin{align*}
I=\int_{|\xi-\zeta|\le {\delta}}\Big(e^{\psi(\xi)-\psi(\zeta)}-1\Big)^2J^{\alpha}(\xi, \zeta)\rd \zeta
\le\int_{|s|\le {\delta}}\frac{\eta^2|s|^2e^{2\eta |s|}}{|s|^{d+\alpha}}\rd s
\le c\eta^2e^{2\eta \delta}\delta^{2-\alpha}
\le c e^{3\eta {\delta}}{\delta}^{-\alpha}.
\end{align*}
Thus, 
$\Upsilon_{\delta}(\psi)(\xi)\le \kappa ce^{3\eta \delta}{\delta}^{-\alpha}.$
Applying the same argument to $-\psi$, we also obtain $\Upsilon_{\delta}(-\psi)(\xi)\le \kappa ce^{3\eta \delta}{\delta}^{-\alpha}.$ 
Thus
\begin{align*}
-E_{\delta}(2t,x,y)&\le -\eta R+ \frac{\kappa ct}{\delta^{\alpha}}e^{3\eta \delta}=-\eta R+\kappa c.
\end{align*}
Using this, \eqref{p_trunc1} and $\delta^{\alpha}>2t$, we obtain
\begin{align}\label{trunc_upper}
p^{{\delta}}(t,x,y)&\le C t^{-d/\alpha}\exp(c t {\delta}^{-\alpha}-E_{\delta}(2t, x,y))\nn\\
&\le C t^{-d/\alpha}\exp\Big((1/2+\kappa)c-\eta R\Big)
= C' t^{-d/\alpha}\Big(\frac{t}{\delta^{\alpha}}\Big)^{(d+\alpha)/\alpha}
= C' \frac{t}{\delta^{d+\alpha}}\,.
\end{align}

From Meyer's construction and \autoref{l:J_outball},
\begin{align*}
p(t,x,y)\le p^{\delta}(t,x,y)+t\sup_{\xi\in\R^d}\|J_\delta(\xi,\cdot)\|_{\infty}\le c\frac{t}{\delta^{d+\alpha}}=\frac{c't}{|x-y|^{d+\alpha}}.
\end{align*}

\medskip

(ii) Using \eqref{e:uhk}, we obtain \eqref{exit_est}. The proof is standard and thus we omit it.

\end{proof}

\medskip

Let $t>0$ and $x, y\in \R^d$. Suppose $\dist(y-x, \cV)\le t^{1/\alpha}$. Then, the upper bound for the heat kernel in \eqref{e:main} follows from \autoref{t:uhk}(i) since 
\begin{align*}
1\wedge\frac{t}{(\dist(y-x, \cV))^{\alpha}}=1.
\end{align*}
Thus, for the sharp upper bound, it suffices to consider that $t>0$, $x,y\in\R^d$ with $\dist(y-x, \cV)> t^{1/\alpha}$. This happens only if $y-x\notin \cV$. We will prove the sharp upper bound for the case $x,y\in\Rd$ with $y-x\notin \cV$ in \autoref{s:sub}.

\section{Sharp upper bounds}\label{s:sub}

We first introduce L\'evy system for stochastic process with jumps. See, for example, \cite[Appendix A]{CK08}.
\begin{lemma}
For any $x\in \R^{d}$, stopping time $T$ (with respect to the filtration of $X$), and non-negative measurable function $f$ on $\R_+ \times \R^{d}\times \R^{d}$ with $f(s, y, y)=0$ for all $y\in\R^{d}$ and $s\ge 0$, we have 
	\begin{equation}\label{eq:LS}
	\E^x \left[\sum_{s\le T} f(s,X_{s-}, X_s) \right] = \E^x \left[ \int_0^T \int_{\R^d} 
	f(s,X_s, y) 
	J(X_s, y) \rd y  \rd s \right].
	\end{equation}
\end{lemma}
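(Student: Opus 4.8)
The plan is to derive (\ref{eq:LS}) from the standard theory of L\'evy systems for symmetric Hunt processes, in the form used in \cite[Appendix A]{CK08}, and then to upgrade the statement to time-dependent $f$ and arbitrary stopping times by a monotone-class argument. First I would record that the regular Dirichlet form $(\cE,\cF)$ of (\ref{def:DF}) is of pure-jump type: its Beurling--Deny decomposition has no diffusion part and no killing part, and its jumping measure is the symmetric measure $J(x,y)\,\rd x\,\rd y$ on $\Rd\times\Rd$ off the diagonal, which is genuinely Radon there because $\int_{\{|x-y|>r\}}J(x,y)\,\rd y<\infty$ for every $r>0$ by \autoref{l:J_outball}. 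The absence of a killing term and the conservativeness of $X$ follow from the comparison (\ref{J_comp}) with the conservative L\'evy process $Y$; recall also from \autoref{r:Holder} that we may take the exceptional set empty and $X$ to start from every point of $\Rd$, so that (\ref{eq:LS}) makes sense for all $x\in\Rd$.

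Being a symmetric Hunt process associated with a regular Dirichlet form, $X$ then admits a L\'evy system $(N(x,\rd y),H_t)$ --- a kernel $N$ on $\Rd$ together with a positive continuous additive functional $H$ --- such that for every non-negative measurable $g$ on $\Rd\times\Rd$ with $g(y,y)=0$ and every stopping time $T$,
\begin{align*}
\E^x\Big[\sum_{s\le T}g(X_{s-},X_s)\Big]=\E^x\Big[\int_0^T\int_{\Rd}g(X_s,y)\,N(X_s,\rd y)\,\rd H_s\Big].
\end{align*}
To identify $(N,H)$ I would invoke the Revuz correspondence: the Revuz measure of the additive functional $t\mapsto\sum_{s\le t}g(X_{s-},X_s)$ equals $\big(\int_{\Rd}g(x,y)\,N(x,\rd y)\big)\mu_H(\rd x)$ on one hand and, computed from the jumping measure of $\cE$ identified above, $\big(\int_{\Rd}g(x,y)\,J(x,y)\,\rd y\big)\rd x$ on the other. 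Letting $g$ range over a rich enough family (e.g.\ products of indicators of balls) forces $N(x,\rd y)\,\mu_H(\rd x)=J(x,y)\,\rd y\,\rd x$, hence $\mu_H$ is absolutely continuous and, after the obvious normalization, $H_t=t$ and $N(x,\rd y)=J(x,y)\,\rd y$. This gives (\ref{eq:LS}) for time-independent $f$. I expect this identification --- matching the Revuz measure coming from the Beurling--Deny jumping measure with the one coming from the abstract L\'evy system --- to be the only genuinely non-routine step; it is precisely the computation carried out in \cite[Appendix A]{CK08}, and it applies here verbatim since $J\asymp J^{\alpha}$ is locally finite off the diagonal.

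Finally, to pass from time-independent functions to $f(s,x,y)$ I would argue by a monotone class. For $f(s,x,y)=\1_{(a,b]}(s)\,h(x,y)$ with $h\ge0$ and $h(y,y)=0$, setting $B^h_t:=\sum_{s\le t}h(X_{s-},X_s)$ one has the elementary identity $\sum_{s\le T}f(s,X_{s-},X_s)=B^h_{b\wedge T}-B^h_{a\wedge T}$; applying the time-independent case at the stopping times $b\wedge T$ and $a\wedge T$ and subtracting yields (\ref{eq:LS}) for such $f$, since $\int_{a\wedge T}^{b\wedge T}(\cdots)\,\rd s=\int_0^T\1_{(a,b]}(s)(\cdots)\,\rd s$. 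Finite linear combinations of such $f$ generate all non-negative measurable $f$ with $f(s,y,y)=0$ through a Dynkin-class argument and monotone convergence (legitimate because all the quantities involved are non-negative), which completes the proof.
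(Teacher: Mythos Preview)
The paper does not actually prove this lemma; it simply states it and refers the reader to \cite[Appendix A]{CK08}. Your proposal is precisely a write-up of that reference's argument --- existence of an abstract L\'evy system $(N,H)$ for the symmetric Hunt process, identification $N(x,\rd y)=J(x,y)\,\rd y$ and $H_t=t$ via the Revuz correspondence against the Beurling--Deny jumping measure, then a monotone-class step for the time variable --- so there is no real divergence to discuss.

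One small point: your aside that ``conservativeness of $X$ follows from the comparison (\ref{J_comp}) with the conservative L\'evy process $Y$'' is not quite right as stated, since comparability of quadratic forms alone does not transfer conservativeness; in the paper conservativeness is obtained later (\autoref{conservative}) from the near-diagonal lower bound and \cite[Proposition~3.1]{CKW21}. Fortunately this is irrelevant here, because the L\'evy-system identity for a symmetric Hunt process does not require conservativeness at all, so you can simply delete that clause. Everything else --- the pure-jump Beurling--Deny structure, the use of \autoref{l:J_outball} for local finiteness off the diagonal, the appeal to \autoref{r:Holder} to work with an empty exceptional set, and the Dynkin-class extension to $f(s,x,y)$ --- is in order.
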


\medskip

The aim of this section is to prove the following: 
\begin{theorem}\label{t:sub}
There exists a constant $C>0$ such that for $t > 0$,  $x_0, y_0\in \R^d$ , 
\begin{align}\label{e:sub}
p(t,x_0,y_0)\le C t^{-d/\alpha}\left(1\wedge\frac{t}{|x_0-y_0|^{\alpha}}\right)^{1+d/\alpha}\left(1\wedge\frac{t}{(\dist(y_0-x_0,\cV))^{\alpha}}\right).
\end{align}
\end{theorem}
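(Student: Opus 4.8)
The plan is to bootstrap from the rough bounds already in hand: the "full-size" sharp bound $p(t,x,y)\le Ct^{-d/\alpha}(1\wedge t/|x-y|^\alpha)^{1+d/\alpha}$ from \autoref{t:uhk}(i), the on-diagonal bound $p(t,x,y)\le Ct^{-d/\alpha}$, and the exit-time estimate $\bP^x(\tau_{B(x,r)}\le t)\le ctr^{-\alpha}$. As remarked after \autoref{t:uhk}, we may assume $y_0-x_0\notin\cV$ and $\dist(y_0-x_0,\cV)>t^{1/\alpha}$; write $\rho:=\dist(y_0-x_0,\cV)$ and $R:=|x_0-y_0|$. We may also assume $t\le R^\alpha$, since otherwise all the $1\wedge(\cdot)$ factors are comparable to $1$ and \eqref{e:uhkd} already gives \eqref{e:sub}. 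So the target is to gain the extra factor $t/\rho^\alpha$ beyond \autoref{t:uhk}(i).

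The key observation is geometric: since $y_0-x_0\notin\cV=\cup_i\Gamma_i$, there is a ball $B(y_0-x_0,\rho)$ disjoint from $\cV$, hence $X$ started near $x_0$ cannot reach a neighborhood of $y_0$ in a single jump; it needs at least two jumps, the last of which lands in a small ball around $y_0$. Concretely, I would set $r:=\rho/(4C_*)$ (with $C_*$ as in \eqref{C_*}, so $r\lesssim R$), let $B_0=B(x_0,r)$, $B_1=B(y_0,r)$, and use the standard decomposition via the L\'evy system \eqref{eq:LS}. For $z\in B_0$, $w\in B_1$ we have $w-z\notin\cV$ and in fact $\dist(w-z,\cV)\gtrsim\rho$, so $J(z,w)\le\kappa|z-w|^{-d-\alpha}\1_{\cV}(z-w)=0$; more precisely the relevant bound is that the jump intensity from $B_0^{(\text{something})}$ into $B_1$ is controlled. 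I would run the argument: by the semigroup property and the strong Markov property at the time $\sigma$ of the first jump of size $\ge$ (order $\rho$),
\begin{align*}
p(t,x_0,y_0)&=\bE^{x_0}\!\big[p(t-\sigma,X_\sigma,y_0);\sigma\le t\big]
\end{align*}
combined with the L\'evy system to express the law of $X_\sigma$. Split $t$ into whether $\sigma\le t/2$ or not, and on $\{X_\sigma\in B(y_0,\rho/2)\}$ versus not; the first jump out of a ball of radius $\sim\rho$ carries intensity $\lesssim\rho^{-\alpha}$ by \autoref{l:J_outball}, producing the factor $t\rho^{-\alpha}$, while the remaining factor $t^{-d/\alpha}(1\wedge t/R^\alpha)^{1+d/\alpha}$ comes from \autoref{t:uhk}(i) applied either to the pre-$\sigma$ piece (travelling the macroscopic distance $R$) or to the post-$\sigma$ piece, depending on which of the two jumps is "long". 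One has to be careful to track that $|x_0-z|\vee|y_0-z|\gtrsim R$ for the intermediate landing point $z$, which is exactly \eqref{dist_comp}/\autoref{r:setD}(ii) in disguise.

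The main obstacle is organizing the two-jump decomposition so that the product $(1\wedge t/R^\alpha)^{1+d/\alpha}\cdot(t/\rho^\alpha)$ emerges with the correct powers and is not lost to over-counting. The natural route is an iteration/induction on $t$ (or a dyadic decomposition of the time interval): apply \eqref{eq:LS} once to peel off the first jump of size $\ge\rho/(4C_*)$, bound $p(t-s,X_s,y_0)$ by \autoref{t:uhk}(i), and integrate; the $J$-integral over the landing region near $y_0$ contributes $\lesssim t\rho^{-\alpha}$, and one must check that on the complementary event (no such jump, or the jump lands far from $y_0$) the surviving factor is already small enough — here the exit estimate \eqref{exit_est} and the fact that reaching $y_0$ at all forces a jump across a gap of width $\gtrsim\rho$ are used. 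A secondary technical point is handling the regime $t^{1/\alpha}<\rho\le R$ versus $\rho\asymp R$ uniformly; since $\rho\le C_* R$ always, the factor $(t/\rho^\alpha)$ never beats $(t/R^\alpha)$, so the final bound \eqref{e:sub} is consistent with \autoref{t:uhk}(i), and the job is purely to extract the additional decay, which the one-extra-jump cost $t\rho^{-\alpha}$ supplies.
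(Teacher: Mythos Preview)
Your intuition is right and your ``which jump is long'' dichotomy is exactly the paper's $W_1/W_2$ split in \autoref{p:sub_key}. But the decomposition you write down has a gap. The identity
\[
p(t,x_0,y_0)=\bE^{x_0}\big[p(t-\sigma,X_\sigma,y_0);\,\sigma\le t\big],
\]
with $\sigma$ the first jump of size $\ge c\rho$, is false: there is a leftover term from $\{\sigma>t\}$, namely the heat kernel $p^{\delta}(t,x_0,y_0)$ of the process truncated at level $\delta=c\rho$. Your justification that this vanishes---``reaching $y_0$ at all forces a jump across a gap of width $\gtrsim\rho$''---is not correct: since any symmetric cone $\Gamma\subset\cV$ spans $\R^d$, the vector $y_0-x_0$ is a finite sum of vectors in $\Gamma$, so the process can travel from $x_0$ to $y_0$ by arbitrarily many small jumps. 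One can in principle control $p^\delta(t,x_0,y_0)$ by a further Davies estimate (take $\delta$ a small fixed multiple of $\rho$ and treat $t\gtrsim\rho^\alpha$ separately), but this is extra work your sketch does not supply, and the iteration or dyadic scheme you float as a fallback is unnecessary here.

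The paper sidesteps this entirely by using the first \emph{exit} time $\tau$ from $B=B(x_0,\rho/(8C_*))$ rather than a first-big-jump time, packaged in the Barlow--Grigor'yan--Kumagai inequality of \autoref{lem:LBGK}. That lemma automatically discards the event $\{\tau>t/2\}$ (on it $X$ is still inside $B$, hence away from $\supp f$), so there is no truncated-process term to estimate. On $\{\tau\le t/2\}$ the key geometric fact---proved at the start of \autoref{p:sub_key}---is that $X_\tau\in\cV_B\setminus B$ and $\dist(y_0,\cV_B)\ge\rho/2$ by the very definition of $\rho$, so the post-$\tau$ heat kernel is genuinely off-diagonal and \autoref{t:uhk}(i) applies. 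The split into $W_1=\{w:|x_0-w|<|y_0-w|\}$ and $W_2$ then delivers the product $tR^{-d-\alpha}\cdot t\rho^{-\alpha}$ in one pass: on $W_1$ the post-$\tau$ factor gives $tR^{-d-\alpha}$ and the exit probability \eqref{exit_est} gives $t\rho^{-\alpha}$; on $W_2$ the first jump already has length $\gtrsim R$, so the L\'evy system contributes $R^{-d-\alpha}$ and integrating $|w-y_0|^{-d-\alpha}$ over $\{|w-y_0|\gtrsim\rho\}$ gives $t\rho^{-\alpha}$. No recursion is needed.
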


\medskip

Our approach is similar to that of \cite{KKK19, KK21} where the idea from \cite{BGK09} is used. 
For any two non-negative measurable functions $f, g$ on $\Rd$, set $$\langle f, g \rangle=\int _{\Rd}f(x)g(x)\rd x.$$ 

	\begin{lemma}\cite[Lemma 2.1]{BGK09}\label{lem:LBGK}
		Let $U$ and $V$ be two disjoint non-empty open subsets of $\Rd$ and $f, g$ be non-negative Borel functions on $\Rd$. 
		Let $\tau=\tau_U$ and $\tau^{'}=\tau_V$ be the first exit times from $U$ and $V$, respectively. Then, for all $a, b, t>0$ such that $a+b=t$, we have
		\begin{align}\label{eq:LBGK}
		\langle P_tf, g \rangle  \le \big\langle\E^{\cdot}\big[\1_{\{\tau\le a\}}P_{t-\tau}f(X_{\tau})\big], g\big\rangle+ \big\langle \E^{\cdot}\big[\1_{\{\tau^{'}\le b\}}P_{t-\tau}g(X_{\tau^{'}})\big], f\big\rangle\,.
		\end{align}
	\end{lemma}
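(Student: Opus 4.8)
The plan is to establish \eqref{eq:LBGK} by decomposing $\langle P_tf,g\rangle$ twice: first according to whether $X$ has left $U$ by time $a$, and then, for the ``not yet left $U$'' part, according to whether $X$ has left $V$ by time $b$. The only inputs are the strong Markov property of $X$, the symmetry of the transition kernel $p(t,\cdot,\cdot)$ and of the transition kernel of the process killed upon exiting $U$ (both of which follow from $(\cE,\cF)$ being a symmetric Dirichlet form; recall \autoref{p:uhkd}), and the disjointness $U\cap V=\emptyset$. One may assume $f,g\ge0$ are Borel and read every pairing as an element of $[0,\infty]$; symmetry of the kernels together with Tonelli's theorem then gives $\langle P_sh_1,h_2\rangle=\langle h_1,P_sh_2\rangle$ for all non-negative Borel $h_1,h_2$, so no integrability hypothesis is needed.

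\textbf{Step 1 (split at $\tau=\tau_U$).} For $x\in\R^d$ I would write $P_tf(x)=\E^x[f(X_t);\tau\le a]+\E^x[f(X_t);\tau>a]$. Since $a<t$, the strong Markov property at $\tau$ gives $\E^x[f(X_t);\tau\le a]=\E^x[\1_{\{\tau\le a\}}P_{t-\tau}f(X_\tau)]$, while the (ordinary) Markov property at time $a$, using $\{\tau>a\}=\{a<\tau_U\}$, gives $\E^x[f(X_t);\tau>a]=\E^x[\1_{\{\tau>a\}}P_bf(X_a)]=P_a^U(P_bf)(x)$, where $P_s^U$ denotes the transition operator of $X$ killed upon exiting $U$. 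Pairing with $g$,
\[
\langle P_tf,g\rangle=\big\langle\E^{\cdot}[\1_{\{\tau\le a\}}P_{t-\tau}f(X_\tau)],g\big\rangle+\langle P_a^U(P_bf),g\rangle .
\]
The first summand is exactly the first term of \eqref{eq:LBGK}, so it remains to dominate $\langle P_a^U(P_bf),g\rangle$ by the second term.

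\textbf{Step 2 (split at $\tau'=\tau_V$).} As the part of a symmetric Dirichlet form on $U$, $P_a^U$ has a symmetric kernel, and $P_a^Ug$ vanishes a.e.\ off $U$, hence a.e.\ on $V$ because $U\cap V=\emptyset$. Using symmetry of the kernels of $P_a^U$ and of $P_b$, one has $\langle P_a^U(P_bf),g\rangle=\langle P_bf,P_a^Ug\rangle=\langle f,P_b(P_a^Ug)\rangle$. Since $P_a^Ug$ is a.e.\ supported in $U\subset V^c$ and $\bP^x(X_b\in\cdot)$ is absolutely continuous,
\[
P_b(P_a^Ug)(x)=\E^x\big[(P_a^Ug)(X_b)\big]=\E^x\big[(P_a^Ug)(X_b);X_b\notin V\big]\le\E^x\big[(P_a^Ug)(X_b);\tau'\le b\big],
\]
because $\{X_b\notin V\}\subset\{\tau'\le b\}$. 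Applying the strong Markov property at $\tau'$ and then the pointwise bound $P_{b-\tau'}(P_a^Ug)\le P_{b-\tau'}(P_ag)=P_{t-\tau'}g$ (valid on $\{\tau'\le b\}$, using $a+b=t$),
\[
\E^x\big[(P_a^Ug)(X_b);\tau'\le b\big]=\E^x\big[\1_{\{\tau'\le b\}}\big(P_{b-\tau'}(P_a^Ug)\big)(X_{\tau'})\big]\le\E^x\big[\1_{\{\tau'\le b\}}(P_{t-\tau'}g)(X_{\tau'})\big].
\]
Hence $\langle f,P_b(P_a^Ug)\rangle\le\big\langle\E^{\cdot}[\1_{\{\tau'\le b\}}P_{t-\tau'}g(X_{\tau'})],f\big\rangle$, and adding this to Step 1 yields \eqref{eq:LBGK}. (As displayed, the exponent in the second term of \eqref{eq:LBGK} should read $P_{t-\tau'}$ rather than $P_{t-\tau}$.)

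\textbf{Expected main obstacle.} Nothing here is deep; the proof is bookkeeping with the Markov property, and the essential structural input is the disjointness $U\cap V=\emptyset$, which is precisely what forces the intermediate point $X_b$ outside $V$ and thereby brings $\tau'$ into play, producing the second term. The only points needing a word of care are (a) the kernel-symmetry of the killed semigroup $P^U_s$ on $L^2(\R^d)$, which is standard part-Dirichlet-form theory, and (b) the measurability/regularity required to apply the strong Markov property of the Hunt process at the stopping times $\tau_U$ and $\tau_V$; both are routine for $X$.
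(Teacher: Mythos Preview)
The paper does not prove this lemma at all; it is quoted verbatim from \cite[Lemma~2.1]{BGK09} and used as a black box. Your argument is correct and is essentially the standard proof from \cite{BGK09}: decompose at time $a$ according to $\{\tau_U\le a\}$, use symmetry of $P_a^U$ and $P_b$ to move to the $f$-side, then exploit $U\cap V=\emptyset$ together with the strong Markov property at $\tau_V$ and the domination $P^U_a\le P_a$. Your observation that the displayed statement should read $P_{t-\tau'}$ in the second term is also correct; this is a typographical slip in the paper.
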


The following proposition will play a key role in obtaining sharp upper bounds. Similar results for singular jump processes and direction-depending Markov processes are shown in \cite[Proposition 3.3]{KKK19} and \cite[Proposition 4.4, Proposition 4.9]{KK21}, respectively. Recall that $C_*\ge1$ is the constant in \eqref{C_*}.

\begin{proposition}\label{p:sub_key}
Let $t > 0$ and  $x_0, y_0\in \R^d$. Set $\rho=t^{1/\alpha}$ and $R_1=\dist(y_0-x_0,\cV)$. Let $f$ be a non-negative Borel function on $\R^d$ supported in $B(y_0, {\rho}/{(8C_*)})$. Define an exit time $\tau$ by $\tau=\tau_{B(x_0, {R_1}/{(8C_*)})}$. Then there exists $C>0$ independent of  $x_0, y_0$ and $t$ such that for every $x\in B(x_0, {\rho}/{(8C_*)})$,
	\begin{align}\label{e:sub_key}
	\begin{split}
	&\E^{x}\left[\1_{\{\tau\le t/2\}}P_{t-\tau}f(X_{\tau})\right]\\
	&\le \,C t^{-d/\alpha} \|f\|_1  \left(1\wedge\frac{t}{|x_0-y_0|^{\alpha}}\right)^{1+d/\alpha}\left(1\wedge\frac{t}{(\dist(y_0-x_0,\cV))^\alpha}\right).
	\end{split}
	\end{align}
\end{proposition}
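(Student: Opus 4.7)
The plan is to apply the Lévy system formula \eqref{eq:LS} to convert the expectation into a space--time integral against the jump kernel, then exploit the cone structure of $\cV$ to extract the factor $1 \wedge t/R_1^\alpha$. Write $B := B(x_0, R_1/(8C_*))$ and $R := |x_0 - y_0|$. Applying \eqref{eq:LS} with $T = \tau$ and test function $(s, u, v) \mapsto \mathbf{1}_{\{s \le t/2\}}\, \mathbf{1}_{B^c}(v)\, P_{t-s} f(v)$ yields
\[
\E^x\bigl[\mathbf{1}_{\{\tau \le t/2\}} P_{t-\tau} f(X_\tau)\bigr] = \E^x\!\int_0^{\tau \wedge t/2}\!\!\int_{B^c}\! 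P_{t-s} f(z)\, J(X_s, z)\, dz\, ds.
\]
For $s < \tau$ we have $X_s \in B$, and the condition $J(X_s, z) > 0$ forces $z - X_s \in \cV$. Combined with $|X_s - x_0| < R_1/(8C_*)$ and the $1$-Lipschitz property of $w \mapsto \dist(w, \cV)$, this yields $|z - y_0| \ge R_1 - |X_s - x_0| \ge 7R_1/8$. Since $\supp f \subset B(y_0, \rho/(8C_*))$ and $R \ge R_1$ (because $\dist(y_0 - x_0, \cV) \le |y_0 - x_0|$), we get $|z - y| \asymp |z - y_0|$ for $y \in \supp f$, and \autoref{t:uhk}(i) gives
\[
P_{t-s} f(z) \le C\Phi(|z - y_0|)\, \|f\|_1, \qquad \Phi(r) := t^{-d/\alpha}\bigl(1 \wedge t/r^\alpha\bigr)^{1+d/\alpha}.
\]

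\textbf{Core estimate.} The claim is immediate when $R \le \rho$ (from $P_{t-\tau} f \le Ct^{-d/\alpha}\|f\|_1$ together with both $(1 \wedge t/R^\alpha)$ and $(1 \wedge t/R_1^\alpha)$ equal to $1$), so I focus on $R > \rho$. For fixed $X_s = u \in B$, change variables $w := z - u$ and $v := y_0 - u$; note $|v| \asymp R$ and $\dist(v, \cV) \ge 7R_1/8$. The inner $z$-integral is bounded, up to constants, by
\[
I(u) := \int_{\cV} \mathbf{1}_{B^c}(u + w)\, \frac{\Phi(|w - v|)}{|w|^{d+\alpha}}\, dw.
\]
Provided $u$ lies in the concentric inner ball $B_1 := B(x_0, R_1/(16C_*))$, the exit constraint $z \in B^c$ automatically forces $|w| \ge R_1/(16C_*)$. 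Split $I(u)$ according to whether $|w - v| \ge R/2$ or $|w - v| < R/2$. In the first region, $\Phi(|w - v|) \le Ct/R^{d+\alpha}$ and the tail estimate $\int_{w \in \cV,\, |w| \ge R_1/(16C_*)} |w|^{-d-\alpha}\, dw \lesssim R_1^{-\alpha}$ give a contribution $\lesssim t/(R^{d+\alpha} R_1^\alpha)$. In the second region, the triangle inequality forces $|w| \gtrsim R$, so $|w|^{-d-\alpha} \lesssim R^{-d-\alpha}$; a dyadic decomposition of $\{2^k R_1 \le |w - v| < 2^{k+1} R_1\} \cap \cV$ (each piece having volume $\lesssim 2^{kd} R_1^d$) sums to $\int \Phi(|w - v|)\, dw \lesssim t/R_1^\alpha$, again yielding $\lesssim t/(R^{d+\alpha} R_1^\alpha)$. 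Integrating over $s \in [0, t/2]$ matches the target bound, with analogous modifications (replacing $R_1$ by $\rho$ in the cutoff when $R_1 \le \rho$) handling the remaining regime.

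\textbf{Main obstacle.} The technical crux is controlling the contribution from $X_s \in B \setminus B_1$, where $|w| = |z - X_s|$ can approach zero and the integral $I(X_s)$ may diverge. Resolve this by splitting the time integral at $\tau_1 := \tau_{B_1}$ and applying the strong Markov property at $\tau_1$. The survival estimate \autoref{t:uhk}(ii) controls $\bP^{X_{\tau_1}}(\tau_B \le r) \lesssim r\, d(X_{\tau_1}, \partial B)^{-\alpha}$, while the key saving observation is that small exit jumps from $B \setminus B_1$ land at $z$ with $|z - y_0| \asymp R$ (since $z \approx X_s \approx x_0$), so $P_{t-s} f(z) \lesssim \Phi(R)\|f\|_1$ there. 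The survival probability times this pointwise bound absorbs the boundary-layer contribution into the same overall estimate $Ct^{-d/\alpha}\|f\|_1\,(1 \wedge t/R^\alpha)^{1+d/\alpha}\,(1 \wedge t/R_1^\alpha)$.
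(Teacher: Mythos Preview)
Your overall strategy---reducing to the case $R_1 > \rho$, bounding $P_{t-s}f(z)$ by $C\Phi(|z-y_0|)\|f\|_1$ via the cone geometry, and splitting according to whether $|z-y_0|$ is large or small relative to $R$---is essentially the paper's. Your Region~1 ($|z-y_0|\ge R/2$) and Region~2 ($|z-y_0|<R/2$) correspond to the paper's sets $W_1$ and $W_2$, and your Region~2 argument (noting $|z-X_s|\gtrsim R$ uniformly for $X_s\in B$ and summing the dyadic shells of $\Phi$, using $\dist(v,\cV)\gtrsim R_1$) matches the paper's treatment of $W_2$.

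The gap is in Region~1 and your ``Main obstacle'' fix. By pushing the \emph{entire} expectation through the L\'evy system and then trying to bound $\int_{B^c\cap\cV_{X_s}}|z-X_s|^{-d-\alpha}\,dz$ pointwise in $X_s$, you manufacture a difficulty: this integral is $\asymp d(X_s,\partial B)^{-\alpha}$, which blows up near $\partial B$. Your proposed remedy---stopping at $\tau_1=\tau_{B_1}$ and invoking $\bP^{X_{\tau_1}}(\tau_B\le r)\lesssim r\,d(X_{\tau_1},\partial B)^{-\alpha}$---does not close, because $d(X_{\tau_1},\partial B)$ is itself uncontrolled (the exit jump from $B_1$ may land arbitrarily close to $\partial B$), so the product you form is not dominated by the target $Ct^2\|f\|_1/(R^{d+\alpha}R_1^\alpha)$.

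The paper sidesteps this entirely by \emph{not} using the L\'evy system on $W_1$. There one simply combines the pointwise bound $P_{t-\tau}f(X_\tau)\le Ct\|f\|_1/R^{d+\alpha}$ (valid since $|X_\tau-y_0|\ge R/2$ on $W_1$) with $\bP^x(\tau\le t/2)\le Ct/R_1^\alpha$ from \autoref{t:uhk}(ii); no information about $X_s$ for $s<\tau$ is required. Equivalently, if you insist on the L\'evy-system formulation, the identity
\[
\E^x\!\int_0^{\tau\wedge t/2}\!\int_{B^c} J(X_s,z)\,dz\,ds \;=\; \bP^x(\tau\le t/2)
\]
(a second application of \eqref{eq:LS}) converts your Region~1 contribution directly into $Ct\|f\|_1 R^{-d-\alpha}\cdot \bP^x(\tau\le t/2)$, with no pointwise control on $|z-X_s|$ needed. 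Either way, the inner ball $B_1$, the strong Markov step, and the boundary-layer analysis are unnecessary.
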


\begin{proof}
Let $Q\in\partial \cV$ be a point such that $\dist(y_0-x_0, \cV)=|y_0-x_0-Q|=:R_1$.  If $R_1\le \rho=t^{1/\alpha}$, \eqref{e:sub_key} follows from \eqref{e:uhk}. Thus, for the rest of proof we assume that $y_0-x_0\notin\cV$ and $R_1>\rho$. Let $R_2:=|x_0-y_0|$. Then,  $\rho<R_1\le C_*R_2$.  Let $B:=B(x_0, R_1/(8C_*))$. Note that if the Markov process $X$ starting from $x\in B$ exits the ball $B$, $X_{\tau_B}$ should be in $\cV_B\setminus B$.  Then, by the definition of $Q$, we see that for $y\in B(y_0, \rho/(8C_*))$, 
\begin{align*}
\inf_{w\in\cV_B\setminus B}|y-w|\ge\inf_{w\in\cV_B}|y-w|\ge |y_0-Q|-|y_0-y|-\frac{R_1}{8C_*}\ge R_1-\frac{\rho}{8C_*}-\frac{R_1}{8C_*}\ge \frac{R_1}{2}.
\end{align*}
This implies that for $y\in B(y_0, \rho/(8C_*))$ and $w\in \cV_B\setminus B$,
\begin{align}\label{comp_yy0}
\frac12|y-w|\le |y-w|-|y_0-y|\le |y_0-w|\le |y_0-y|+|y-w|\le 2|y-w|.
\end{align}

By \eqref{e:uhk} and \eqref{comp_yy0},  for $\tau\le t/2$ and $w\in\cV_B\setminus B$
\begin{align}
\begin{split}\label{sg_bd}
P_{t-\tau}f(w)&=\int_{B(y_0, \rho/(8C_*))}p(t-\tau, w, y)f(y) \rd y\\
&\le C\int_{B(y_0, \rho/(8C_*))}\frac{t-\tau}{|w-y|^{d+\alpha}}f(y)\rd y\le \frac{Ct\|f\|_1}{|w-y_0|^{d+\alpha}}\,.
\end{split}
\end{align}
Let 
\begin{align*}
W_1&:=\{w\in \cV_{B}\setminus B: |x_0-w|<|y_0-w|\},\\
W_2&:=\{w\in \cV_{B}\setminus B: |x_0-w|\ge|y_0-w|\}.
\end{align*}
Using \eqref{sg_bd}, we have
\begin{align}\label{step1}
&\E^{x}\left[\1_{\{\tau\le t/2\}}P_{t-\tau}f(X_{\tau})\right]\nn\\
&=\E^{x}\left[\1_{\{\tau\le t/2\}}\1_{\{X_\tau\in W_1\}}P_{t-\tau}f(X_{\tau})\right]+\E^{x}\left[\1_{\{\tau\le t/2\}}\1_{\{X_\tau\in W_2\}}P_{t-\tau}f(X_{\tau})\right]\nn\\
&\le Ct\|f\|_1\left(\E^{x}\left[\1_{\{\tau\le t/2\}}\1_{\{X_{\tau}\in W_1\}}\frac{1}{|X_{\tau}-y_0|^{d+\alpha}}\right]+\E^{x}\left[\1_{\{\tau\le t/2\}}\1_{\{X_{\tau}\in W_2\}}\frac{1}{|X_{\tau}-y_0|^{d+\alpha}}\right]\right)\nn\\
&=:Ct\|f\|_1(I+II).
\end{align}

Now, we observe that for $w\in W_1$, 
\begin{align}\label{ineq_W1}
|x_0-y_0|\le  |x_0-w|+|y_0-w|\le 2|y_0-w|.
\end{align}
For $v\in B$ and $w\in W_2$, we also see that
\begin{align}\label{ineq_W2}
|x_0-y_0|\le 4|v-w|.
\end{align}
Indeed, $w\in W_2$ implies that $|x_0-w|\ge \frac12|x_0-y_0|=\frac{C_1}{2} R_2$. Using this and $R_2\ge R_1/C_*$, we have that for $v\in B(x_0, R_1/(8C_*))$,
$$|v-w|\ge |x_0-w|-|v-x_0|\ge \frac{R_2}{2}-\frac{R_1}{8C_*}\ge \frac{R_1}{4C_*}\ge |x_0-v|.$$
Thus, for $w\in W_2$ and $v\in B$ 
$$|x_0-y_0|\le|x_0-w|+|w-y_0| \le  2|x_0-w|\le 2(|x_0-v|+|v-w|)\le 4|v-w|.$$
By \eqref{ineq_W1} and \eqref{exit_est},
\begin{align}\label{step2}
I&=\E^{x}\left[\1_{\{\tau\le t/2\}}\1_{\{X_{\tau}\in W_1\}}\frac{t}{|X_{\tau}-y_0|^{d+\alpha}}\right]
\le \frac{C\bP^{x}(\tau\le t/2)}{|x_0-y_0|^{d+\alpha}}\le \frac{C}{|x_0-y_0|^{d+\alpha}}\frac{t}{R_1^{\alpha}}.
\end{align}
By the L\'evy system \eqref{eq:LS}, \eqref{ineq_W2} and $\cV_{B}\subset B(y_0, R_1/(8C_*))^c$,
\begin{align}\label{step3}
II&\le \E^{x}\int^{\tau\wedge t/2}_{0}\int_{W_2}\frac{1}{|w-y_0|^{d+\alpha}}\frac{1}{|X_s-w|^{d+\alpha}}\rd w \rd s\nn\\
&\le \frac{4^{d+\alpha}}{|x_0-y_0|^{d+\alpha}}\E^{x}\int^{\tau\wedge t/2}_{0}\int_{W_2}\frac{1}{|w-y_0|^{d+\alpha}} \rd w \rd s\nn\\
&\le \frac{Ct}{|x_0-y_0|^{d+\alpha}}\int_{B(y_0, R_1/(8C_*))^c}\frac{1}{|w-y_0|^{d+\alpha}} \rd w\nn\\
&\le \frac{Ct}{|x_0-y_0|^{d+\alpha}}\frac{c}{R_1^{\alpha}}.
\end{align}
Thus, by \eqref{step1}, \eqref{step2} and \eqref{step3}, we obtain the desired result.
\end{proof}

\bigskip

Since we have \autoref{p:sub_key}, we can prove \autoref{t:sub} by the same arguments in \cite{KKK19, KK21}. The proof is much simpler than the ones in \cite{KKK19, KK21} since we need not use iterative scheme. 

\medskip

\begin{proof}[Proof of \autoref{t:sub}]
Consider non-negative Borel functions $f, g$ on $\R^d$ supported in $B(y_0, \frac{\rho}{8C_*})$ and $B(x_0, \frac{\rho}{8C_*})$, respectively.  
We apply 
\autoref{lem:LBGK} with functions $f, g$,  subsets $U:=B(x_0, s), V:=B(y_0, s)$ for some $s>0$, $a=b=t/2$ and $\tau=\tau_{U}, \tau^{'}=\tau_{V}$.
The first term of the right hand side of \eqref{eq:LBGK} is
\begin{align*}
\left\langle \E^{\cdot}\left[\1_{\{\tau\le t/2\}}P_{t-\tau}f(X_{\tau})\right], g\right\rangle=\int_{B(x_0, \frac{\rho}{8C_*})}\E^{x}\left[\1_{\{\tau\le t/2\}}P_{t-\tau}f(X_{\tau})\right]\,g(x)\, \rd x ,
\end{align*}
and a similar identity holds for the second term. 
By \eqref{e:sub_key}, 
\begin{align*}
&\left\langle\E^{\cdot}\left[\1_{\{\tau\le t/2\}}P_{t-\tau}f(X_{\tau})\right], g\right\rangle\nn\\
&\le  \,C t^{-d/\alpha} \|f\|_1\|g\|_1  \left(1\wedge\frac{t}{|x_0-y_0|^{\alpha}}\right)^{1+d/\alpha}\left(1\wedge\frac{t}{(\dist(y_0-x_0,\cV))^\alpha}\right).
\end{align*}
Similarly we obtain the second term of right hand side of \eqref{eq:LBGK} and thus,
\begin{align*}
\left\langle P_tf, g\right\rangle\le Ct^{-d/\alpha} \|f\|_1\|g\|_1\left(1\wedge\frac{t}{|x_0-y_0|^{\alpha}}\right)^{1+d/\alpha}\left(1\wedge\frac{t}{(\dist(y_0-x_0,\cV))^\alpha}\right).
\end{align*} 
Since $P_tf(x)=\int_{\Rd} p(t, x, y)f(y)\rd y$ and $p$ is a continuous function, we obtain that for $t>0$ and $x_0, y_0\in\R^d$,
 \begin{align*}
 p(t, x_0,y_0)
& \le C t^{-d/\alpha}\left(1\wedge\frac{t}{|x_0-y_0|^{\alpha}}\right)^{1+d/\alpha}\left(1\wedge\frac{t}{(\dist(y_0-x_0,\cV))^\alpha}\right).
 \end{align*}
This proves  \autoref{t:sub}.
\end{proof}

\vspace{5mm}

\section{Lower bound estimates}\label{s:lbe}
In this section, we give sharp lower bound of the heat kernel. First, we show near diagonal lower bound by using survival time estimate and H\"older continuity of the heat kernel.

\begin{proposition}\label{p:ndl}
There exist positive constants $c$ and $\eps$ such that
\begin{align*}
p(t,x,y)\ge c t^{-d/\alpha}\qquad\text{for}\;\;\;y\in B(x, \eps t^{1/\alpha}).
\end{align*}
\end{proposition}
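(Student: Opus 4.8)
The plan is to follow the standard three-step route: an exit-time (survival) estimate, an on-diagonal lower bound deduced from it via Chapman--Kolmogorov and the Cauchy--Schwarz inequality, and finally the passage to a neighbourhood of the diagonal using the H\"older continuity of $p$ recorded in \autoref{r:Holder}.

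First I would fix $t>0$, $x\in\R^d$ and invoke the exit estimate \eqref{exit_est}: with $a:=(2c)^{1/\alpha}$, where $c$ is the constant there, we get $\bP^x\big(\tau_{B(x,at^{1/\alpha})}\le t\big)\le c\,t\,(at^{1/\alpha})^{-\alpha}=\tfrac12$, hence
\[
\bP^x\big(X_t\in B(x,at^{1/\alpha})\big)\ \ge\ \bP^x\big(\tau_{B(x,at^{1/\alpha})}>t\big)\ \ge\ \tfrac12 .
\]
Then, using $\bE^x[f(X_t)]=\int_{\R^d}p(t,x,y)f(y)\,\rd y$ with $f=\1_{B(x,at^{1/\alpha})}$, the symmetry $p(t,x,y)=p(t,y,x)$, Chapman--Kolmogorov and Cauchy--Schwarz,
\[
\tfrac14\ \le\ \Big(\int_{B(x,at^{1/\alpha})}p(t,x,y)\,\rd y\Big)^{2}\ \le\ |B(x,at^{1/\alpha})|\int_{\R^d}p(t,x,y)^{2}\,\rd y\ =\ c_d\,a^{d}\,t^{d/\alpha}\,p(2t,x,x),
\]
so that $p(2t,x,x)\ge c_1t^{-d/\alpha}$ for every $t>0$, i.e.\ $p(t,x,x)\ge c_2t^{-d/\alpha}$ for every $t>0$.

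To conclude I would transfer this on-diagonal bound to a neighbourhood of the diagonal. By \autoref{r:Holder} (the hypotheses $\mathrm{K}_1$, $\mathrm{K}_2$, $\mathrm{K}_3$ of \cite{FK13} hold for $J$), for each fixed $x$ the map $(s,z)\mapsto p(s,x,z)$ is a bounded parabolic function, and the scale-invariant parabolic H\"older estimate of \cite[Theorem 1.2]{FK13}, together with the on-diagonal upper bound \eqref{e:uhk}, produces $\beta\in(0,1]$ and $C_3>0$, independent of $t$ and $x$, such that
\[
\big|p(t,x,x)-p(t,x,y)\big|\ \le\ C_3\Big(\frac{|x-y|}{t^{1/\alpha}}\Big)^{\beta}t^{-d/\alpha}\qquad\text{whenever } y\in B(x,t^{1/\alpha}).
\]
Combining the last two displays, for $y\in B(x,\eps t^{1/\alpha})$ one gets $p(t,x,y)\ge\big(c_2-C_3\eps^{\beta}\big)t^{-d/\alpha}$, which is $\ge\tfrac{c_2}{2}t^{-d/\alpha}$ as soon as $\eps:=1\wedge(c_2/(2C_3))^{1/\beta}$; this is the asserted estimate with $c=c_2/2$.

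The only genuinely delicate point is the scale-invariant H\"older estimate used in the last paragraph: since $J$ is merely \emph{comparable} to the $\alpha$-stable-type kernel and $X$ possesses no exact scaling, one must apply the parabolic regularity of \cite{FK13} in a form whose constants are uniform under the parabolic dilations $z\mapsto x+\lambda z$, $s\mapsto\lambda^{\alpha}s$, and control the nonlocal tail term appearing there by means of \eqref{e:uhk}; the remaining steps are entirely routine.
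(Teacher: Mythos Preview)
Your proof is correct and follows essentially the same route as the paper: survival estimate $\Rightarrow$ on-diagonal lower bound via Cauchy--Schwarz/Chapman--Kolmogorov $\Rightarrow$ transfer to a neighbourhood via the parabolic H\"older regularity of \cite{FK13}. The paper's version is slightly terser (it integrates $p(t/2,x,\cdot)$ over a ball of radius $(2c_1t)^{1/\alpha}$ and applies Jensen, which is the same computation as your Cauchy--Schwarz step with a trivial relabelling of $t$), and it invokes the H\"older continuity in the bare form ``choose $\eps$ so that $|p(t,x,y)-p(t,x,z)|\le \tfrac{c_2}{2}t^{-d/\alpha}$ on $B(x,\eps t^{1/\alpha})$'' without spelling out the scale-invariant estimate you write down; your remark that this uniformity under parabolic dilations is the one point requiring care is well taken and is implicit in the paper's use of \cite{FK13}.
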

\begin{proof}
By \autoref{t:uhk}(ii), we see that there exists $c_1>0$ such that
$$\bP^x(\tau_{B(x,r)}<t)\le c_1 tr^{-\alpha}.$$
Thus,
\begin{align*}
\int_{\R^{d}\setminus B(x, (2c_1 t)^{1/\alpha})}p(t/2, x,y)\rd y \le \bP^x(\tau_{B(x, (2c_1 t)^{1/\alpha})}<t)\le \frac12.
\end{align*}
Thus, by Jensen's inequality,
\begin{align}\label{odl}
p(t,x,x)&=\int_{\R^{d}}p(t/2, x,y)^2\rd y
\ge \int_{B(x, (2c_1 t)^{1/\alpha})}p(t/2, x,y)^2\rd y\nn\\
&\ge \frac{1}{|B(x, (2c_1 t)^{1/\alpha})|}\bigg(\int_{B(x, (2c_1 t)^{1/\alpha})}p(t/2, x,y)\rd y\bigg)^2
\ge c_2 t^{-d/\alpha}.
\end{align}
Note that $c_2$ is  independent of $t>0$ and $x\in \R^{d}$. 

On the other hand, by the H\"older continuity for $p(t,x,\cdot)$ in \cite{FK13}, we can take $\eps=\eps(c_2)$ such that
\begin{align*}
|p(t,x,y)-p(t,x,z)|\le \frac{c_2}{2} t^{-d/\alpha},\quad\text{for all}\;\;y,z\in B(x, \eps t^{1/\alpha}).
\end{align*}
Thus, by \eqref{odl} and the above inequality for $y\in B(x, \eps t^{1/\alpha})$,
\begin{align*}
p(t,x,y)\ge p(t,x,x)-\frac{c_2}{2}t^{-d/\alpha}\ge \frac{c_2}{2} t^{-d/\alpha}.
\end{align*}
\end{proof}

\medskip

The following proposition follows from \autoref{p:ndl} and \cite[Proposition 3.1]{CKW21}.
\begin{proposition}\label{conservative}
The process $X$ is conservative; that is, $X$ has infinite lifetime.
\end{proposition}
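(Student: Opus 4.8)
The plan is to derive conservativeness from \autoref{p:ndl} via \cite[Proposition 3.1]{CKW21}. The first step is to record what \autoref{p:ndl} gives about the survival probability of $X$. Writing $\zeta$ for the lifetime of $X$, so that $\bP^x(\zeta>t)=\int_{\Rd}p(t,x,y)\,\rd y$ for all $t>0$ and $x\in\Rd$, and using the constants $c,\eps>0$ from \autoref{p:ndl} (independent of $t$ and $x$), for which $p(t,x,y)\ge c\,t^{-d/\alpha}$ whenever $y\in B(x,\eps t^{1/\alpha})$, I would integrate over that ball, whose volume is $|B(0,1)|\,\eps^d\,t^{d/\alpha}$, to get
\begin{align*}
\bP^x(\zeta>t)\ge\int_{B(x,\eps t^{1/\alpha})}p(t,x,y)\,\rd y\ge c\,|B(0,1)|\,\eps^d=:c_0>0,
\end{align*}
a lower bound uniform in $t>0$ and $x\in\Rd$; equivalently, the near-diagonal estimate of \autoref{p:ndl} is available at \emph{every} time scale.

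The second step is to invoke \cite[Proposition 3.1]{CKW21}, whose hypotheses are exactly of this type --- a scale-invariant near-diagonal lower bound, in the presence of the volume-doubling property of $\Rd$ with Lebesgue measure and of the Faber--Krahn/Nash inequality \eqref{e:Nash} established in \autoref{p:uhkd}(i), all of which hold here --- and whose conclusion is precisely $\bP^x(\zeta=\infty)=1$ for every $x\in\Rd$, i.e., $X$ is conservative. Thus the argument reduces to the display above together with a verification that the cited proposition applies.

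I expect no genuine obstacle, the real content --- the passage from a scale-invariant survival lower bound to conservativeness --- being outsourced to \cite{CKW21}; note that this passage is not formal, as the naive Markov-property iteration merely yields $\bP^x(\zeta>nt)\ge c_0^{\,n}$, which is consistent with a non-conservative process. As a self-contained alternative I would instead argue directly from the exit-time estimate \eqref{exit_est} of \autoref{t:uhk}(ii): with $\tau_\infty:=\lim_{n\to\infty}\tau_{B(x,n)}$, the events $\{\tau_{B(x,n)}\le t\}$ decrease to $\{\tau_\infty\le t\}$, so that $\bP^x(\tau_\infty\le t)=\lim_{n}\bP^x(\tau_{B(x,n)}\le t)\le\lim_{n}c\,t\,n^{-\alpha}=0$ for every $t>0$; hence $\tau_\infty=\infty$ $\bP^x$-a.s., and since the Dirichlet form $(\cE,\cF)$ in \eqref{def:DF} has no killing part, $\tau_\infty=\zeta$ $\bP^x$-a.s., so conservativeness follows.
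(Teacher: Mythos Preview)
Your proposal is correct and takes essentially the same approach as the paper, which simply states that the result follows from \autoref{p:ndl} and \cite[Proposition 3.1]{CKW21}. Your additional self-contained alternative via the exit-time estimate \eqref{exit_est} is also valid and goes beyond what the paper provides.
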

Using the above results, we give estimates of mean exit time of a ball.
\begin{proposition}\label{p:meanexit}
(i) There exists a constant $c_1>0$ such that for $x_0\in \R^{d}$ and $r>0$,
\begin{align*}
\bE^x[\tau_{B(x_0,r)}]\le c_1 r^{\alpha}
\end{align*}
for all $x\in B(x_0,r)$.\\
(ii) There exists a constant $c_2>0$ such that for $x\in\Rd$ and $r>0$,
\begin{align*}
\bE^x[\tau_{B(x,r)}]\ge c_2 r^{\alpha}.
\end{align*}
\end{proposition}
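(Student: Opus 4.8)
The plan is to derive both bounds from the heat‑kernel estimates already established, together with the exit‑time estimate \eqref{exit_est} and the near‑diagonal lower bound \autoref{p:ndl}. For part (i), I would integrate the exit probability bound in time. Concretely, writing $\tau=\tau_{B(x_0,r)}$, we have $\bE^x[\tau]=\int_0^\infty \bP^x(\tau>s)\,\rd s$. By \eqref{exit_est}, for any $s>0$ and any $x\in B(x_0,r)$,
\begin{align*}
\bP^x(\tau>s)\ge \bP^x(X_s\in B(x_0,r),\ \tau>s)
\end{align*}
is not directly what we want; instead the cleaner route is: $\bP^x(\tau\le s)\le c s r^{-\alpha}$ gives, for $s=t_0:= r^\alpha/(2c)$, that $\bP^x(\tau> t_0)\ge 1/2$ for all starting points $x\in B(x_0,r)$. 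Iterating the Markov property at times $k t_0$ yields $\bP^x(\tau> k t_0)\le 2^{-k}$ (using that from any point of $B(x_0,r)$ the process still has probability $\ge 1/2$ of surviving a further $t_0$), hence $\bE^x[\tau]\le t_0\sum_{k\ge 0}\bP^x(\tau>k t_0)\le 2 t_0 = c_1 r^\alpha$. This is entirely standard.

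For part (ii), the idea is that the process started at the center cannot leave $B(x,r)$ too quickly because the transition density is genuinely spread out on the scale $r^\alpha$. Fix $t=\eps_0 r^\alpha$ for a small constant $\eps_0$ to be chosen. By \autoref{p:ndl} together with the on‑diagonal upper bound \eqref{e:uhkd} (or \eqref{e:uhk}), one controls the mass of $p(t,x,\cdot)$ inside and outside $B(x,r)$: the near‑diagonal lower bound gives $p(s,x,y)\ge c\,s^{-d/\alpha}$ for $y\in B(x,\eps s^{1/\alpha})$, so for suitable $\eps_0$ the ball $B(x, \eps t^{1/\alpha})\subset B(x,r/2)$ and
\begin{align*}
\bP^x(X_t\in B(x,r/2))\ge \int_{B(x,\eps t^{1/\alpha})} p(t,x,y)\,\rd y\ge c\, \eps^d |B(0,1)| =: c_3>0,
\end{align*}
a constant independent of $r$ and $x$. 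On the other hand, if $\tau_{B(x,r)}\le t$, then by the strong Markov property at $\tau_{B(x,r)}$ and the exit estimate \eqref{exit_est} applied from $X_{\tau}$ to the ball $B(X_\tau, r/2)$ (whose complement contains $x$, hence contains points forcing $X_t\notin B(x,r/2)$ — more precisely one uses that on $\{\tau\le t\}$, $\bP^{X_\tau}(X_{t-\tau}\in B(x,r/2))\le \bP^{X_\tau}(\tau_{B(X_\tau,r/2)}\le t)\le c\,t(r/2)^{-\alpha} = c' \eps_0$), one gets $\bP^x(\tau\le t,\ X_t\in B(x,r/2))\le c'\eps_0$. Choosing $\eps_0$ small enough that $c'\eps_0< c_3/2$ yields $\bP^x(\tau> t)\ge \bP^x(X_t\in B(x,r/2)) - \bP^x(\tau\le t,\ X_t\in B(x,r/2)) \ge c_3/2$, and therefore $\bE^x[\tau]\ge t\,\bP^x(\tau>t)\ge (c_3/2)\eps_0 r^\alpha =: c_2 r^\alpha$.

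The only delicate point is the second part: one must make sure that "escaping $B(x,r)$ by time $t$" genuinely costs probability mass of being back near $x$ at time $t$, which is where the interplay between \eqref{exit_est} (applied from the exit point, on the half‑radius ball) and the lower mass estimate $c_3$ is used. I expect this balancing of constants — choosing $\eps_0$ after $\eps$ and after the constant in \eqref{exit_est} — to be the main (though still routine) obstacle; everything else is a direct quotation of \autoref{t:uhk}(ii) and \autoref{p:ndl}.
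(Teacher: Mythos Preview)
Your argument for part~(i) contains a genuine direction error. The survival estimate \eqref{exit_est} says $\bP^x(\tau_{B(x_0,r)}\le s)\le c s r^{-\alpha}$, which for $t_0=r^\alpha/(2c)$ yields $\bP^y(\tau>t_0)\ge 1/2$ for every $y\in B(x_0,r)$. Iterating the Markov property with this \emph{lower} bound on the one-step survival probability produces
\[
\bP^x(\tau>(k+1)t_0)=\bE^x\big[\1_{\{\tau>kt_0\}}\,\bP^{X_{kt_0}}(\tau>t_0)\big]\ge \tfrac12\,\bP^x(\tau>kt_0),
\]
so $\bP^x(\tau>kt_0)\ge 2^{-k}$, not $\le 2^{-k}$. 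In other words, \eqref{exit_est} certifies that the process stays in the ball for a while---exactly the wrong input for an upper bound on $\bE^x[\tau]$. What is needed is a uniform \emph{upper} bound on the one-step survival probability, i.e.\ $\sup_{y\in B(x_0,r)}\bP^y(\tau>t_0)\le 1/2$. The paper obtains this from the on-diagonal upper bound \eqref{e:uhkd} together with conservativeness (\autoref{conservative}): choosing $t=c_3 r^\alpha$ with $c_3$ large, one has $\bP^y(X_t\in B(x_0,r))\le C|B(x_0,r)|\,t^{-d/\alpha}\le 1/2$, hence $\bP^y(\tau\le t)\ge \bP^y(X_t\notin B(x_0,r))\ge 1/2$, and now the iteration goes through in the right direction.

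Your part~(ii) is correct but considerably more elaborate than necessary. Since \eqref{exit_est} already gives $\bP^x(\tau_{B(x,r)}\le t)\le 1/2$ for $t=r^\alpha/(2c)$, the paper simply writes
\[
\bE^x[\tau_{B(x,r)}]\ge t\,\bP^x(\tau_{B(x,r)}\ge t)\ge t/2=r^\alpha/(4c),
\]
with no need for \autoref{p:ndl}, the strong Markov property at the exit time, or any balancing of constants. In effect you have swapped the natural roles of the two ingredients: \eqref{exit_est} is the tool for the lower bound~(ii), while the on-diagonal upper bound (plus conservativeness) is the tool for the upper bound~(i).
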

\begin{proof}
(i) Let $C>0$ be the constant in  \autoref{p:uhkd}(ii). Take large $c_3$ so that $2\pi^{d/2}C \le \Gamma(d/2+1)c_3^{d/\alpha}$, where $r\mapsto\Gamma(r)$ is the gamma function. Then, for every $r>0$, $x_0\in\R^{d}$ and $x\in B(x_0,r)$, with $t:=c_3 r^{\alpha}$, we have by  \autoref{p:uhkd}(ii)
\begin{align*}
\bP^x(X_t\in B(x_0, r))=\int_{B(x_0, r)} p(t,x,y)\rd y \le \frac{C |B(x_0, r)|}{t^{d/\alpha}}= \frac{C\pi^{d/2}r^d}{\Gamma(d/2+1)c_3^{d/\alpha} r^d}\le \frac12.
\end{align*}  
Since $X$ is  conservative, it follows that for every $x\in B(x_0, r)$,
\begin{align*}
\bP^x(\tau_{B(x_0, r)}\le t)\ge\bP^x(X_t\notin B(x_0, r))\ge\frac12,
\end{align*}  
which implies $\bP^x(\tau_{B(x_0, r)}> t)\le 1/2.$ By the strong Markov property, for integer $k\ge1$,
\begin{align*}
\bP^x\big(\tau_{B(x_0, r)}> (k+1)t\big)\le\bE^x\Big[\bP^{X_{kt}}(\tau_{B(x_0, r)}> t);\tau_{B(x_0, r)}>kt\Big]\le\frac12\bP^{x}(\tau_{B(x_0, r)}> kt).
\end{align*}  
Using induction, we obtain that for every $k\ge1$,
\begin{align*}
\bP^x(\tau_{B(x_0, r)}> kt)\le 2^{-k},
\end{align*}  
which implies that
\begin{align*}
\bE^x[\tau_{B(x_0, r)}]\le \sum^{\infty}_{k=0}t(k+1)\bP^x(\tau_{B(x_0, r)}> kt)\le c_4 r^{\alpha}.
\end{align*}  

\noindent(ii) Let $c>0$ be the constant in \eqref{exit_est} and  $t:= r^{\alpha}/(2c)$. Then,
\begin{align*}
\bE^x[\tau_{B(x_0, r)}]\ge t\;\bP^x(\tau_{B(x_0, r)}\ge t)
=t\big(1-\bP^x(\tau_{B(x_0, r)}< t)\big)\ge t(1-1/2)=\frac{r^{\alpha}}{4c}.
\end{align*}
\end{proof}

Now, we will find the lower bound of $p(t,x,y)$ for all $t>0$ and $x,y\in\R^d$. If $y-x\in \cV$, then we apply the method used for isotropic processes. If $y-x\notin \cV$, then we use semigroup property and the result for the case $y-x\in \cV$. 
\begin{theorem}\label{t:lhk}
There exist a constant $C>0$ such that for any $(t,x,y)\in (0,\infty)\times \R^d\times \R^d$,
\begin{align}\label{lhk}
p(t,x,y)\ge C\,t^{-d/\alpha}\left(1 \wedge\frac{t}{|x-y|^{\alpha}}\right)^{1+d/\alpha}\left(1\wedge\frac{t}{(\dist(y-x,\cV))^{\alpha}}\right).
\end{align}
\end{theorem}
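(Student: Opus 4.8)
The plan is to split into two cases according to whether $y-x\in\cV$ or not, exactly as the paragraph preceding the statement suggests. \emph{Case 1: $y-x\in\cV$.} Here the process can jump from $x$ directly toward $y$, so $\dist(y-x,\cV)=0$ and the last factor in \eqref{lhk} equals $1$; we must show $p(t,x,y)\ge C t^{-d/\alpha}(1\wedge t/|x-y|^\alpha)^{1+d/\alpha}$, i.e. the usual isotropic $\alpha$-stable lower bound. First I would use the near-diagonal lower bound \autoref{p:ndl}: if $|x-y|\le \eps t^{1/\alpha}$ we are already done. For $|x-y|>\eps t^{1/\alpha}$ I would run the standard chaining/L\'evy-system argument (as in \cite{CK03}, and as used in \cite{KKK19, KK21}): write $t/2+t/2$, force the process to travel from $x$ to a small ball near $y$ by making a single ``good'' jump along the cone direction containing $y-x$, and conclude. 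Concretely, using the semigroup property and \autoref{p:ndl},
\begin{align*}
p(t,x,y)\ge \int_{B(z_0,\eps(t/3)^{1/\alpha})} p(t/3,x,w)\,p(2t/3,w,y)\,\d w,
\end{align*}
where $z_0$ lies on the segment-like path inside $\cV_x$; combined with the exit-time estimate \autoref{p:meanexit} and the L\'evy system \eqref{eq:LS} to produce the jump of size $\asymp |x-y|$ with probability $\asymp t/|x-y|^\alpha$ before time $t/2$, this gives the factor $(t/|x-y|^\alpha)^{1+d/\alpha}$ — one power $d/\alpha$ from the normalization $t^{-d/\alpha}$ turning into $|x-y|^{-d}$ after the jump and the volume of the target ball, one extra power $1$ from the jump probability itself. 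The key input is that, since $y-x\in\cV$, the intermediate points can be chosen so that $w-x$ and $y-w$ both stay in (a fixed sub-cone of) $\cV$, so the jump kernel $J$ is genuinely comparable to $|x-y|^{-d-\alpha}$ along the whole path; this is where one uses that $\cV$ is a \emph{union of cones} and is scale-invariant.

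\emph{Case 2: $y-x\notin\cV$.} Now the process needs at least two jumps. Let $Q\in\partial\cV$ realize $\dist(y-x,\cV)=|y-x-Q|=R_1$, and pick a cone $\Gamma_i$ in the union \eqref{general_set} with $Q\in\partial\Gamma_i$; then $x_*:=x+Q$ satisfies $x_*-x\in\overline{\cV}$ and $y-x_*$ has norm $R_1=\dist(y-x,\cV)$, and moreover — shrinking the cone slightly and using \autoref{r:setD} — one can arrange that $x_*$ is an approximate midpoint-type point so that both $x_*-x$ and $y-x_*$ lie in $\cV$ (this is the content of \autoref{d:setD} and \autoref{r:setD}, specialized here). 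I would then apply the semigroup property through a small ball around $x_*$:
\begin{align*}
p(t,x,y)\ge \int_{B(x_*,r)} p(t/2,x,w)\,p(t/2,w,y)\,\d w,
\end{align*}
with $r\asymp t^{1/\alpha}$ chosen small enough that for $w\in B(x_*,r)$ both $w-x\in\cV$ and $y-w\in\cV$ (using that $\cV$ is open away from $0$ and the sub-cone trick). On each factor apply Case 1: $p(t/2,x,w)\ge c(t/2)^{-d/\alpha}(1\wedge t/|x-w|^\alpha)^{1+d/\alpha}$ and $p(t/2,w,y)\ge c(t/2)^{-d/\alpha}(1\wedge t/|y-w|^\alpha)^{1+d/\alpha}$. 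Using \autoref{r:setD}(ii)--(iii), $|x-w|\vee|y-w|\asymp |x-y|$ and $|x-w|\wedge|y-w|\asymp R_1=\dist(y-x,\cV)$, while the volume of $B(x_*,r)$ contributes $r^d\asymp t^{d/\alpha}$. Multiplying the two lower bounds and integrating gives
\begin{align*}
p(t,x,y)\ge c\,t^{-d/\alpha}\Big(1\wedge\frac{t}{|x-y|^\alpha}\Big)^{1+d/\alpha}\Big(1\wedge\frac{t}{R_1^\alpha}\Big)^{1+d/\alpha},
\end{align*}
which is \emph{stronger} than \eqref{lhk} because $(1\wedge t/R_1^\alpha)^{1+d/\alpha}\le 1\wedge t/R_1^\alpha$. (Note we only need the weaker factor, so there is slack here.)

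\textbf{Main obstacle.} The delicate point is Case 1, the single-jump lower bound along the cone: one must verify that the chaining argument can be localized so that every pair of successive points along the path stays inside a fixed sub-cone of $\cV$ (so that $J\asymp |{\cdot}|^{-d-\alpha}$ there and the near-diagonal bound \autoref{p:ndl} applies at each stage), and that the exit-time estimates \autoref{p:meanexit} hold uniformly. Because $\cV$ is a finite union of cones, $|x-y|^{-1}(y-x)$ lies in the interior of one $\Gamma_i$ possibly only after shrinking aperture, and near $\partial\Gamma_i$ one must be careful that the target ball of radius $\asymp t^{1/\alpha}$ around $y$ still lies in $\cV_x$; this forces the radius-to-distance ratios (the factors $1/(8C_*)$ etc. from \autoref{p:sub_key}) to be chosen consistently. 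The second, minor, obstacle in Case 2 is exhibiting the point $x_*$ with both $x_*-x,\,y-x_*\in\cV$ simultaneously — but \autoref{d:setD} together with \autoref{r:setD} is precisely designed to hand this to us, so it should reduce to quoting those.
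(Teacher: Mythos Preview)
Your Case~1 outline is fine and matches the paper's argument (near-diagonal bound plus a single L\'evy-system jump inside the cone).

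There is, however, a genuine gap in Case~2. Your final display asserts
\[
p(t,x,y)\ge c\,t^{-d/\alpha}\Big(1\wedge\frac{t}{|x-y|^\alpha}\Big)^{1+d/\alpha}\Big(1\wedge\frac{t}{R_1^\alpha}\Big)^{1+d/\alpha}
\]
and then claims this is ``stronger'' than \eqref{lhk} because $(1\wedge t/R_1^\alpha)^{1+d/\alpha}\le 1\wedge t/R_1^\alpha$. That inequality goes the wrong way for a \emph{lower} bound: when $\dist(y-x,\cV)\gg t^{1/\alpha}$ your bound is \emph{smaller} than the target by the factor $(t^{1/\alpha}/\dist(y-x,\cV))^{d}$, so it does not imply \eqref{lhk}. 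The loss comes precisely from integrating over a ball of radius $r\asymp t^{1/\alpha}$: with volume $\asymp t^{d/\alpha}$ and both factors $p(t/2,x,w)$, $p(t/2,w,y)$ estimated by Case~1, you get $t^{2+d/\alpha}/(|x-y|^{d+\alpha}\dist^{d+\alpha})$, which is off by $\dist^{-d}\cdot t^{d/\alpha}$.

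The paper's remedy is to choose the integration region at scale $\dist(y-x,\cV)$ rather than $t^{1/\alpha}$. Writing $z_0\in S(\Gamma,x,y)$, $R_1=|x-z_0|\vee|y-z_0|\asymp|x-y|$ and $R_2=|x-z_0|\wedge|y-z_0|\asymp\dist(y-x,\cV)$, the paper splits Case~2 according to whether $R_2\lesssim t^{1/\alpha}$ (then a ball of radius $\asymp t^{1/\alpha}$ inside $\Gamma^+_{z_0}$ works, with the near-diagonal bound on the short leg) or $R_2\gtrsim t^{1/\alpha}$; in the latter sub-case one integrates over a conical annulus $\Gamma^+_y(\lambda,\theta')\cap(B(y,c_7R_2)\setminus B(y,c_6R_2))$ of volume $\asymp R_2^{d}$, arranged to sit inside $\Gamma^+_{z_0}\subset\Gamma^+_x\cap\Gamma^+_y$. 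On that set $|x-z|\asymp R_1$ and $|y-z|\asymp R_2$, so Case~1 on each leg gives $t/R_1^{d+\alpha}\cdot t/R_2^{d+\alpha}\cdot R_2^{d}=t^2/(R_1^{d+\alpha}R_2^{\alpha})$, which is exactly the right-hand side of \eqref{lhk}.

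Two smaller points you should also fix. First, your intermediate point $x_*=x+Q$ with $Q$ the nearest point of $\partial\cV$ to $y-x$ need not satisfy $y-x_*\in\cV$: when the cone aperture is $\le\pi/4$ the foot of the perpendicular from $y-x$ to $\partial\Gamma$ gives a residual $y-x-Q$ that is orthogonal to $\partial\Gamma$ and hence outside $\Gamma$. You must use a point of $S(\Gamma,x,y)$, for which both $z_0-x$ and $z_0-y$ lie on $\partial\Gamma^+$. Second, even with the correct $z_0$, a full ball $B(z_0,r)$ is never contained in $\cV_x\cap\cV_y$ because $z_0$ sits on the boundary of both translated cones; one has to intersect with $\Gamma^+_{z_0}$ (which is contained in $\Gamma^+_x\cap\Gamma^+_y$ by convexity of $\Gamma^+$) to make Case~1 applicable on both legs.
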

\begin{proof}
Let $\eps>0$ be the constant in \autoref{p:ndl}. Fix $x,y\in\R^d$ and $t>0$. If $|x-y|<\eps t^{1/\alpha}$, then \eqref{lhk} holds by \autoref{p:ndl} and \eqref{dist_comp}. Thus, for the rest of proof, we assume that $|x-y|\ge\eps t^{1/\alpha}$.

\smallskip

({\it Case 1}) $\1_{\cV}(x-y)=1$.

In this case, $\dist(y-x,\cV)=0$ and thus $\left(1\wedge\frac{t}{(\dist(y-x,\cV))^{\alpha}}\right)=1$. 
By \eqref{exit_est}, there exists $c_1>0$ such that 
\begin{align}\label{e:nbe1}
\bP^x(\tau_{B(x,r)} \le t) \le \frac{c_1 t}{r^{\alpha}}
\end{align}
for any $t,r>0$ and $x \in \R^d$. 
Let $\eps_1=(1/2)^{1/\alpha}\eps\in (0, \eps)$ so that for all $b \in(0, 1/2]$
\begin{align}\label{e:nbe3}
\eps (1-b)^{1/\alpha}t^{1/\alpha} \ge \eps_1 t^{1/\alpha} ,  \quad \text{for all } t>0.
\end{align}
Choose small $b\in(0,1/2)$ satisfying $b<\frac{1}{2c_1}(\eps_1/6)^{\alpha}$. Then, we see by \eqref{e:nbe1} that
\begin{align}
\label{e:nbe4}
 \bP^x(\tau_{B(x,2\eps_1 t^{1/\alpha}/3)} \le b t)\le  \bP^x(\tau_{B(x,\eps_1 t^{1/\alpha}/6)} \le b t) \le 1/2, \quad \text{for all } t>0 \text{ and } x \in \R^d.
 \end{align}
Using \autoref{p:ndl}, symmetry of $p(t,\cdot,\cdot)$ and \eqref{e:nbe3}, 
\begin{align*}
p(t,x,y) &\ge \int_{B(y,\eps (1-b)^{1/\alpha}t^{1/\alpha})} p(b t,x,z) p((1-b)t,z,y) \rd z \\
&\ge \inf_{z \in B(y,\eps (1-b)^{1/\alpha}t^{1/\alpha})} p((1-b)t,z,y) \int_{B(y,\eps (1-b)^{1/\alpha}t^{1/\alpha})} p(b t,x,z) \rd z \\
&\ge c_0t^{-d/\alpha} \bP^x ( X_{b t} \in B(y,\eps_1 t^{1/\alpha})).
\end{align*}
Thus, for \eqref{lhk}, it suffices to prove that
\begin{align}
\label{e:nbe2}
\bP^x ( X_{b t} \in B(y,\eps_1 t^{1/\alpha}))  \ge c_2\frac{t^{1+d/\alpha}}{|x-y|^{d+\alpha}}.
\end{align}
For $A \subset \R^d$, let $\sigma_A := \inf\{t>0:X_t \in A \}$.
Using  \eqref{e:nbe4} and the strong Markov property we have 
\begin{align*}
& \bP^x(X_{b t} \in B(y,\eps_1 t^{1/\alpha})) \\
&\ge \bP^x \left( \sigma_{B(y,\eps_1t^{1/\alpha}/3)} \le b t; \sup_{s \in [\sigma_{B(y,\eps_1t^{1/\alpha}/3)},b t]} \big|X_s - X_{\sigma_{B(y,\eps_1t^{1/\alpha}/3)}} \big| \le \frac{2\eps_1t^{1/\alpha}}{3} \right) \\
&\ge \bP^x (\sigma_{B(y,\eps_1t^{1/\alpha}/3)} \le b t ) \inf_{z \in B(y,\eps_1 t^{1/\alpha}/3)} \bP^z (\tau_{B(z,2\eps_1 t^{1/\alpha}/3)} >b t) \\
&\ge \frac{1}{2}\bP^x (\sigma_{B(y,\eps_1t^{1/\alpha}/3)} \le b t) \\
&\ge \frac{1}{2}\bP^x \Big(X_{(b t) \land \tau_{B(x,2\eps_1 t^{1/\alpha}/3)}} \in B(y,\eps_1t^{1/\alpha}/3) \Big).
\end{align*} 
Since $|x-y| \ge \eps t^{1/\alpha} > \eps_1 t^{1/\alpha}$, it is easy to see that $B(y, \eps_1 t^{1/\alpha}/3) \subset \Rd\setminus\overline{B(x,2\eps_1t^{1/\alpha}/3)}$. Thus, by the L\'evy system and \eqref{J_comp}, we have 
\begin{align}\label{e:nbe5}
&\bP^x \Big(X_{(b t) \land \tau_{B(x,2\eps_1 t^{1/\alpha}/3)}} \in B(y,\eps_1t^{1/\alpha}/3) \Big)\nn \\
&  = \E^x \left[ \sum_{s \le (b t) \land \tau_{B(x,2\eps_1 t^{1/\alpha}/3)}} \1_{ \{ X_s \in B(y,\eps_1t^{1/\alpha}/3)  \}  }   \right]\nn\\
&  \ge \E^x \Bigg[ \int_0^{(b t) \land \tau_{B(x,2\eps_1 t^{1/\alpha}/3)}}  \int_{B(y, \eps_1 t^{1/\alpha}/3)} J(X_s,u) \,\rd u \,  \rd s\Bigg] \nn\\
&  \ge c_3\E^x \Bigg[ \int_0^{(b t) \land \tau_{B(x,\eps_1 t^{1/\alpha}/6)}}  \int_{B(y, \eps_1 t^{1/\alpha}/3)} \frac{\1_{\cV}(X_s-u)}{|X_s-u|^{d+\alpha}} \,\rd u\,  \rd s \Bigg]\nn\\
&  = c_3\E^x \Bigg[ \int_0^{(b t) \land \tau_{B(x,\eps_1 t^{1/\alpha}/6)}}  \int_{\cV_{X_s}\cap B(y, \eps_1 t^{1/\alpha}/3)} \frac{1}{|X_s-u|^{d+\alpha}} \,\rd u\,  \rd s \Bigg].
\end{align}
Since $y\in \cV_x$, we have 
$$\sup_{v\in B(x,\eps_1 t^{1/\alpha}/6)}\text{dist}(y, \cV_v)\le {\eps_1t^{1/\alpha}}/{6}.$$
Thus, for $v\in B(x,\eps_1 t^{1/\alpha}/6)$,
$$|\cV_v \cap B(y, \eps_1 t^{1/\alpha}/3)|\ge c_4 t^{d/\alpha},$$
where $c_4=c_4(\eps_1, \theta)>0$ is independent of $t, x, y$. 
Moreover, for $v\in B(x,\eps_1 t^{1/\alpha}/6)$ and $u\in B(y, \eps_1 t^{1/\alpha}/3)$,
$$|v -u | \le |v -x| + |x-y| + |y-u| \le |x-y| + \eps_1 t^{1/\alpha} \le 2|x-y|. $$
Using these observation, \eqref{e:nbe5} and \eqref{e:nbe4},
\begin{align*}
&\bP^x \Big(X_{(b t) \land \tau_{B(x, 2\eps_1 t^{1/\alpha}/3)}} \in B(y,\eps_1t^{1/\alpha}/3) \Big) 
\ge c_5\E^x \Big[(b t) \land \tau_{B(x, \eps_1 t^{1/\alpha}/6)} \Big]  \frac{c_4t^{d/\alpha}}{|x-y|^{d+\alpha}} \\
& \ge c_4c_5 (b t) \bP^x\big( \tau_{B(x, \eps_1 t^{1/\alpha}/6)} \ge b t \big) \frac{t^{d/\alpha}}{|x-y|^{d+\alpha}} 
\ge c_4c_5b 2^{-1}  \frac{t^{1+d/\alpha}}{|x-y|^{d+\alpha}},
\end{align*}
which proves \eqref{e:nbe2} for $x,y\in\R^d$ with $y\in \cV_x$.

\medskip

({\it Case 2}) $\1_{\cV}(x-y)=0$.

\smallskip

Recall that $\cV$ is defined by \eqref{general_set}. Let $\Gamma_i\subset \cV$ be a symmetric cone such that $\dist(y-x,\cV)=\dist(y-x, \Gamma_i)$. For simplicity, we will write $\Gamma=\Gamma_i$. 
Let $z_0\in S(\Gamma, x,y)\subset\partial \Gamma_x\cap \partial \Gamma_y$. Then, $x,y\in \partial \Gamma_{z_0}$. Let $R_1:=|x-z_0|$ and $R_2:=|y-z_0|$.
Without loss of generality, we assume that $R_1\ge R_2$ and $z_0\in \partial\Gamma^+_{x}$. Then, $x, y\in \partial\Gamma^-_{z_0}$, $\Gamma^+_{z_0}\subset \Gamma^+_x\cap\Gamma^+_y$ and $R_2\asymp \dist(y-x,\Gamma)=\dist(y-x,\cV)$.  
We consider two cases $R_2>\frac{\eps}{2}(t/2)^{1/\alpha}$ and $R_2\le \frac{\eps}{2}(t/2)^{1/\alpha}$ separately.

\medskip

({\it Case 2-i}) $R_2\le \frac{\eps}{2}(t/2)^{1/\alpha}$.

\smallskip

Since we consider $x,y$ with $|x-y|\ge\eps t^{1/\alpha}$, we see that
 $R_1\ge\frac12|x-y|\ge\frac{\eps}{2}t^{1/\alpha}$ by \eqref{dist_comp}. Let $z\in B(z_0, \frac{\eps}{2}(t/2)^{1/\alpha})\cap \Gamma^+_{z_0}$. Then,
\begin{align*}
(1-2^{-1/\alpha})R_1\le |x-z_0|-|z_0-z|\le |x-z|\le |x-z_0|+|z_0-z|\le 2R_1.
\end{align*}
Moreover, $|y-z|\le |y-z_0|+|z_0-z|<\eps (t/2)^{1/\alpha}.$  
 Thus, by the result of ({\it Case 1}), \autoref{p:ndl} and $R_1^{\alpha}\ge({\eps}/{2})^{\alpha}t$,
\begin{align}\label{l_c22}
p(t,x,y) &\ge \int_{B(z_0, \frac{\eps}{2}(t/2)^{1/\alpha})\cap \Gamma_{z_0}} p(t/2,x,z) p(t/2,z,y) \rd z \nn\\
&\ge C\int_{B(z_0, \frac{\eps}{2}(t/2)^{1/\alpha})\cap \Gamma^+_{z_0}} t^{-d/\alpha}\Big(1\wedge \frac{t}{|x-z|^{\alpha}}\Big)^{1+d/\alpha}t^{-d/\alpha} \rd z \nn\\
&\ge Ct^{-d/\alpha}\Big(\frac{t}{R_1^{\alpha}}\Big)^{1+d/\alpha}\int_{B(z_0, \frac{\eps}{2}(t/2)^{1/\alpha})\cap \Gamma^+_{z_0}} t^{-d/\alpha} \,\rd z \nn\\
&\ge Ct^{-d/\alpha}\Big(\frac{t}{R_1^{\alpha}}\Big)^{1+d/\alpha}\nn\\
&\ge Ct^{-d/\alpha}\Big(\frac{t}{R_1^{\alpha}}\Big)^{1+d/\alpha}\Big(1\wedge \frac{t}{R_2^{\alpha}}\Big). 
\end{align}

\smallskip

({\it Case 2-ii}) $R_2>\frac{\eps}{2}(t/2)^{1/\alpha}$.

\smallskip
Let $\lambda=\lambda_i$ and $\theta=\theta_i$, where $\lambda_i$ and $\theta_i$ are the axis and  aperture of $\Gamma$, respectively.   We first observe that  
\begin{align*}
\text{dist}(x, \Gamma^+_{z_0})=
\begin{cases}
\;|x-z_0|&\mbox{if}\;\;\theta\in(0,\pi/4);\\
\;|x-z_0|\sin(\pi-2\theta)&\mbox{if}\;\;\theta\in[\pi/4,\pi/2).
\end{cases}
\end{align*}
This shows that $|x-z_0|\sin(\pi-2\theta)\le |x-z|$ for $z\in \Gamma^+_{z_0}$.
Moreover, it is easy to see that  for $z\in B(z_0, 5R_1)$, 
$$|x-z|\le |x-z_0|+|z_0-z|\le 6|x-z_0|.$$
Thus, $|x-z|\asymp R_1$ for all $z\in \Gamma^+_{z_0}\cap B(z_0, 5R_1)$.

Let $y_1=y+2R_2(\cos\theta)\lambda$.  Then, we see that $y_1\in\partial \Gamma^+_{z_0}$ and $\overline{yy_1}$ is parallel with $\lambda$. 
Let $y_2=2(y_1-z_0)-z_0$ and $y_3=3(y_1-z_0)-z_0$. Then, $y_2, y_3\in\partial\Gamma^+_{z_0}$ and 
\begin{align*}
|\overline{yy_2}|&=R_2\sqrt{9\cos^2\theta+\sin^2\theta}=:c_6R_2,\\ 
|\overline{yy_3}|&=R_2\sqrt{16\cos^2\theta+4\sin^2\theta}=:c_7R_2.
\end{align*}
Take $\theta'\in(0,\theta)$ such that $\tan\theta'=\frac{\sin\theta}{3\cos\theta}$. Then, we see that $y_2\in\partial\Gamma^+_y(\lambda, \theta')$ and 
$$\Gamma^+_y(\lambda, \theta')\setminus B(y, c_6R_2)\subset \Gamma^+_{z_0}.$$
Using that $R_1\ge R_2$, we have $B(y, c_7R_2)\subset B(z_0, 5R_1)$. Indeed, for $w\in B(y, c_7R_2)$, 
$$|z_0-w|\le |z_0-y|+|y-w|\le R_2+c_7R_2< 5R_2\le 5R_1.$$
Combining the above observations, we obtain
\begin{align}\label{l_case2i}
\Gamma^+_y(\lambda, \theta')\cap (B(y, c_7R_2)\setminus B(y, c_6R_2))\subset \Gamma^+_{z_0}\cap B(z_0, 5R_1),
\end{align}
Since $R_2>\frac{\eps}{2}(t/2)^{1/\alpha}$, for $z\in B(y, c_7R_2)\setminus B(y, c_6R_2)$, we have that $|y-z|\asymp R_2>\frac{\eps}{2}(t/2)^{1/\alpha}$. This gives that there exists $c>0$ such that for $z\in  B(y, c_7R_2)\setminus B(y, c_6R_2)$,
$$\Big(t^{-d/\alpha}\wedge \frac{t}{|y-z|^{d+\alpha}}\Big)\ge \frac{ct}{|y-z|^{d+\alpha}}.$$
Using this, \eqref{l_case2i} and the result of ({\it Case 1}), 
\begin{align}\label{l_c21}
p(t,x,y) &\ge \int_{\Gamma^+_{z_0}\cap B(z_0, 5R_1)} p(t/2,x,z) p(t/2,z,y) \rd z \nn\\
&\ge C\int_{\Gamma^+_{z_0}\cap B(z_0, 5R_1)} t^{-d/\alpha}\Big(1\wedge \frac{t}{|x-z|^{\alpha}}\Big)^{1+d/\alpha}\Big(t^{-d/\alpha}\wedge \frac{t}{|y-z|^{d+\alpha}}\Big) \rd z \nn\\
&\ge Ct^{-d/\alpha}\Big(\frac{t}{R_1^{\alpha}}\Big)^{1+d/\alpha}\int_{\Gamma^+_y(\lambda, \theta')\cap (B(y, c_7R_2)\setminus B(y, c_6R_2))}\Big(t^{-d/\alpha}\wedge \frac{t}{|y-z|^{d+\alpha}}\Big) \rd z \nn\\
&\ge Ct^{-d/\alpha}\Big(\frac{t}{R_1^{\alpha}}\Big)^{1+d/\alpha}\int_{\Gamma^+_y(\lambda, \theta')\cap (B(y, c_7R_2)\setminus B(y, c_6R_2))}\frac{t}{|y-z|^{d+\alpha}} \rd z \nn\\
&\ge Ct^{-d/\alpha}\Big(\frac{t}{R_1^{\alpha}}\Big)^{1+d/\alpha}\int_{c_6R_2}^{c_7R_2}\frac{t}{s^{1+\alpha}} \rd s \nn\\
&\ge Ct^{-d/\alpha}\Big(\frac{t}{R_1^{\alpha}}\Big)^{1+d/\alpha}\Big(\frac{t}{R_2^{\alpha}}\Big), 
\end{align}
where $C=C(\theta')>0$.

Thus, combining \eqref{l_c22} and \eqref{l_c21}, we obtain the desired lower bound for ({\it Case 2}).

\end{proof}

\medskip

\begin{proof}[Proof of \autoref{t:main}]
The existence and H\"older continuity of $p(t,x,y)$ follow from \autoref{p:uhkd}(ii) and \autoref{r:Holder}. The H\"older continuity of heat kernel and \autoref{conservative} show that the Hunt process $X$ associated with $(\cE, \cF)$ is conservative Feller process. The heat kernel bounds \eqref{e:main} follow from  \autoref{t:sub} and \autoref{t:lhk}.

\end{proof}

\medskip

\begin{proof}[Proof of \autoref{c:GFE}]
Recall that $d\ge2>\alpha$. By \autoref{t:main}, we see that there exists a constant $C>0$ such that 
$$p(t,x,y)\le C\left(t^{-d/\alpha}\wedge \frac{t}{|x-y|^{d+\alpha}}\right).$$ 
Thus, we obtain
\begin{align*}
G(x,y)\le C\int^{|x-y|^{\alpha}}_{0}\frac{t}{|x-y|^{d+\alpha}}\rd t+C\int_{|x-y|^{\alpha}}^{\infty}t^{-d/\alpha}\rd t\le \frac{C'}{|x-y|^{d-\alpha}}.
\end{align*}
For the lower bound, we observe that $|x-y|\ge \dist(y-x, \cV)$ for any $x,y\in\R^d$. Thus, by \autoref{t:main}, there exists a constant $c>0$ such that
\begin{align*}
p(t,x,y)\ge ct^{-d/\alpha}\quad\text{for}\;\;t>|x-y|^\alpha.
\end{align*}
Thus, we have that for any $x,y\in\R^d$
\begin{align*}
G(x,y)\ge \int^{\infty}_{|x-y|^{\alpha}}p(t,x,y)\rd t\ge c\int^{\infty}_{|x-y|^{\alpha}}t^{-d/\alpha}\rd t=\frac{c'}{|x-y|^{d-\alpha}}.
\end{align*}

\end{proof}

\medskip

{\bf Acknowledgements.}  The author is grateful to Marvin Weidner for giving helpful comments on the  regularity of solutions to equations with nonlocal operators.

\begin{singlespace}
\small

\end{singlespace}

\vskip 0.1truein

\parindent=0em


\begin{thebibliography}{Goossens}


\bibitem[BKKL19]{BKKL19}
J.~Bae, J.~Kang, P.~Kim, and J.~Lee.
\newblock Heat kernel estimates for symmetric jump processes with mixed
  polynomial growths.
\newblock {\em Ann. Probab.}, 47(5):2830--2868, 2019.



\bibitem[BBCK09]{BBCK09}
M. T. Barlow, R. F. Bass, Z.-Q. Chen and M. Kassmann.
\newblock Non-local {D}irichlet forms and symmetric jump processes.
\newblock {\em Trans. Amer. Math. Soc.}, 361(4):1963--1999, 2009.


\bibitem[BGK09]{BGK09}
M. T. Barlow, A. Grigor'yan and T. Kumagai.
\newblock Heat kernel upper bounds for jump processes and the first exit time.
\newblock {\em J. Reine Angew. Math.}, 626:135--157, 2009.


\bibitem[BGR14]{BGR14}
K.~Bogdan, T.~Grzywny, and M.~Ryznar.
\newblock Density and tails of unimodal convolution semigroups.
\newblock {\em J. Funct. Anal.}, 266(6):3543--3571, 2014.




\bibitem[BKS19]{BKS19}
K.-U. Bux, M.~Kassmann, and T.~Schulze.
\newblock Quadratic forms and {S}obolev spaces of fractional order.
\newblock {\em Proc. Lond. Math. Soc. (3)}, 119(3):841--866, 2019.


\bibitem[CKS87]{CKS87}
E. A. Carlen, S. Kusuoka and D. W. Stroock.
\newblock Upper bounds for symmetric {M}arkov transition functions.
\newblock {\em Ann. Inst. H. Poincar{\'e} Probab. Statist.}, 23(2,
  suppl.):245--287, 1987.


\bibitem[CKW19]{CKWb19}
J.~Chaker, M.~Kassmann and M.~Weidner.
\newblock Robust H{\"o}lder estimates for parabolic nonlocal operators.
\newblock {\em arXiv:1912.09919},  2019.


  

\bibitem[CKK08]{CKK08}
Z.-Q. Chen, P. Kim and T. Kumagai.
\newblock Weighted {P}oincar\'e inequality and heat kernel estimates for finite
  range jump processes.
\newblock {\em Math. Ann.}, 342(4):833--883, 2008.

\bibitem[CKK11]{CKK11}
Z.-Q. Chen, P. Kim, and T. Kumagai.
\newblock Global heat kernel estimates for symmetric jump processes.
\newblock {\em Trans. Amer. Math. Soc.}, 363(9):5021--5055, 2011.

\bibitem[CK03]{CK03}
Z.-Q. Chen and T. Kumagai.
\newblock Heat kernel estimates for stable-like processes on {$d$}-sets.
\newblock {\em Stochastic Process. Appl.}, 108(1):27--62, 2003.

\bibitem[CK08]{CK08}
Z.-Q. Chen and T. Kumagai.
\newblock Heat kernel estimates for jump processes of mixed types on metric
  measure spaces.
\newblock {\em Probab. Theory Relat. Fields}, 140(1):277--317, 2008.

\bibitem[CK10]{CK10}
Z.-Q. Chen and T.~Kumagai.
\newblock A priori {H}\"{o}lder estimate, parabolic {H}arnack principle and
  heat kernel estimates for diffusions with jumps.
\newblock {\em Rev. Mat. Iberoam.}, 26(2):551--589, 2010.


\bibitem[CKW21]{CKW21}
Z.-Q. Chen, T.~Kumagai, and J.~Wang.
\newblock Stability of heat kernel estimates for symmetric non-local
  {D}irichlet forms.
\newblock {\em Mem. Amer. Math. Soc.}, 271(1330), 2021.



\bibitem[FK13]{FK13}
M.~Felsinger and M.~Kassmann.
\newblock Local regularity for parabolic nonlocal operators.
\newblock {\em Comm. Partial Differential Equations}, 38(9):1539--1573, 2013.


\bibitem[FOT10]{FOT10}
M.~Fukushima, Y.~Oshima and M.~Takeda.
\newblock {\em Dirichlet forms and symmetric Markov processes(2nd ed.)}.
\newblock de Gruyter, Berlin, 2010.


\bibitem[GRT19]{GRT19}
T.~Grzywny, M.~Ryznar, and B.~Trojan.
\newblock Asymptotic behaviour and estimates of slowly varying convolution
  semigroups.
\newblock {\em Int. Math. Res. Not. IMRN}, (23):7193--7258, 2019.


\bibitem[KS15]{KaSz15}
K.~Kaleta and P.~Sztonyk.
\newblock Estimates of transition densities and their derivatives for jump
  {L}\'evy processes.
\newblock {\em J. Math. Anal. Appl.}, 431(1):260--282, 2015.

\bibitem[KS19]{KaSz19}
K.~Kaleta and P.~Sztonyk.
\newblock Spatial asymptotics at infinity for heat kernels of
  integro-differential operators.
\newblock {\em Trans. Amer. Math. Soc.}, 371(9):6627--6663, 2019.



\bibitem[KK21]{KK21}
J. Kang and M. Kassmann. 
\newblock Heat kernel estimates for Markov processes of direction-dependent type. 
{\it arXiv:2106:07282}.


\bibitem[KKK19]{KKK19}
M. Kassmann, K-Y. Kim and T. Kumagai. 
\newblock Heat kernel bounds for nonlocal operators with singular kernels. 
{\it arXiv:1910.04242}.


\bibitem[M16]{M16}
A.~Mimica.
\newblock Heat kernel estimates for subordinate {B}rownian motions.
\newblock {\em Proc. Lond. Math. Soc. (3)}, 113(5):627--648, 2016.


\bibitem[SU12]{SU12}
R.~L. Schilling and T.~Uemura.
\newblock On the structure of the domain of a symmetric jump-type {D}irichlet
  form.
\newblock {\em Publ. Res. Inst. Math. Sci.}, 48(1):1--20, 2012.

\bibitem[SV07]{SV07}
R.~Song and Z.~Vondra\v{c}ek.
\newblock Parabolic {H}arnack inequality for the mixture of {B}rownian motion
  and stable process.
\newblock {\em Tohoku Math. J. (2)}, 59(1):1--19, 2007.


\bibitem[X13]{Xu13}
F. Xu.
\newblock A class of singular symmetric Markov processes.
\newblock {\em Potential Anal.}, 38(1):207--232, 2013.




\end{thebibliography}
\end{document}